\let\@fnsymbol\@arabic
\newtheorem{theorem}{Theorem}\numberwithin{theorem}{section}
\newtheorem{definition}[theorem]{Definition}
\newtheorem*{definition*}{Definition}
\newtheorem*{proposition*}{Proposition}
\newtheorem{lemma}[theorem]{Lemma}
\newtheorem{corollary}[theorem]{Corollary}
\newtheorem{proposition}[theorem]{Proposition}
\newtheorem{notation}[theorem]{Notation}
\numberwithin{theoremm}{subsection}
\numberwithin{theoremmm}{subsubsection}
\theoremstyle{remark}
\newtheorem{example}[theorem]{Example}
\newtheorem*{example*}{Example}
\newcommand{\Aut}{\operatorname{Aut}}
\newcommand{\PSL}{\operatorname{PSL}}
\newcommand{\Aff}{\operatorname{Aff}}
\newcommand{\lcm}{\operatorname{lcm}}
\newcommand{\ord}{\operatorname{ord}}
\newcommand{\Sym}{\operatorname{Sym}}
\newcommand{\id}{\operatorname{id}}
\newcommand{\GL}{\operatorname{GL}}
\newcommand{\im}{\operatorname{im}}
\newcommand{\Comp}{\operatorname{Comp}}
\newcommand{\Mod}[1]{\ (\textup{mod}\ #1)}
\newcommand{\IN}{\mathbb{N}}
\newcommand{\IF}{\mathbb{F}}
\newcommand{\End}{\operatorname{End}}
\newcommand{\IZ}{\mathbb{Z}}
\newcommand{\CT}{\operatorname{CT}}
\newcommand{\SL}{\operatorname{SL}}
\newcommand{\imp}{\mathrm{imp}}
\newcommand{\IQ}{\mathbb{Q}}
\newcommand{\CWAff}{\operatorname{CWAff}}
\newcommand{\BU}{\operatorname{BU}}
\newcommand{\Bcal}{\mathcal{B}}
\newcommand{\CGL}{\operatorname{CGL}}
\newcommand{\AGL}{\operatorname{AGL}}
\newcommand{\ACGL}{\operatorname{ACGL}}
\newcommand{\RowSpace}{\operatorname{RowSpace}}
\newcommand{\OGL}{\operatorname{OGL}}
\begin{document}

\title{Coset-wise affine functions and cycle types of complete mappings}

\author{Alexander Bors\textsuperscript{1} \and Qiang Wang\thanks{School of Mathematics and Statistics, Carleton University, 1125 Colonel By Drive, Ottawa ON K1S 5B6, Canada. \newline First author's e-mail: \href{mailto:alexanderbors@cunet.carleton.ca}{alexanderbors@cunet.carleton.ca} \newline Second author's e-mail: \href{mailto:wang@math.carleton.ca}{wang@math.carleton.ca} \newline The authors are supported by the Natural Sciences and Engineering Research Council of Canada (RGPIN-2017-06410). \newline 2020 \emph{Mathematics Subject Classification}: Primary: 12E20. Secondary: 11T06, 15A21, 20B05. \newline \emph{Keywords and phrases}: Finite field, Complete mapping, Cycle type, Cycle structure, Wreath product.}}

\date{\today}

\maketitle

\abstract{Let $K$ be a finite field of characteristic $p$. We study a certain class of functions $K\rightarrow K$ that agree with an $\IF_p$-affine function $K\rightarrow K$ on each coset of a given additive subgroup $W$ of $K$ -- we call them \emph{$W$-coset-wise $\IF_p$-affine functions of $K$}. We show that these functions form a permutation group on $K$ with the structure of an imprimitive wreath product and characterize which of them are complete mappings of $K$. As a consequence, we are able to provide various new examples of cycle types of complete mappings of $K$, including that $K$ has a complete mapping moving all elements of $K$ in one cycle if $p>2$.}

\section{Introduction}\label{sec1}

\subsection{Background and main results}\label{subsec1P1}

Let $(G,+)$ be an additive (but not necessarily abelian) group. A \emph{complete mapping of $G$} is a permutation $f$ of $G$ such that the function $f+\id:G\rightarrow G, g\mapsto f(g)+g$, is also a permutation of $G$. A \emph{complete mapping of a field $K$} is just a complete mapping of the underlying additive group of $K$. See Evans' book \cite{Eva92a} for a concise introduction to the theory of complete mappings.

The study of complete mappings has a long and rich history and has been spurred by their various applications. Originally, complete mappings were introduced by Mann in 1942 as a tool in constructing mutually orthogonal Latin squares \cite{Man42a}. The question of which groups admit complete mappings, which comprises the celebrated \emph{Hall-Paige Conjecture}, was heavily studied and finally answered completely by group theorists -- in chronological order, this involved the work of Bateman \cite{Bat50a}, Hall-Paige \cite{HP55a}, Wilcox \cite{Wil09a}, Evans \cite{Eva09a}, and Bray \cite[Section 2]{BCCSZ20a}. A unified proof of this conjecture can be found in Evans' book \cite{Eva18a}, an expansion of his other book \cite{Eva92a} cited earlier.

An important and heavily studied special case are complete mappings of finite fields (or, equivalently, of finite elementary abelian groups), whose polynomial representations have been investigated extensively. An influential early work in this regard is Niederreiter-Robinson's 1984 paper \cite{NR84a}. This paper came before the various practical applications of complete mappings were discovered, for example in check-digit systems \cite{Sch00a,SW10a} and the construction of cryptographic functions \cite{MP14a,SGCGM12a}. These applications spurred even greater interest in complete mappings, see e.g.~\cite{ITW17a,TZH14a,Win14a,WLHZ14a,XC15a,ZHC15a}.

Recall that every permutation $\sigma$ of a finite set $\Omega$ decomposes into pairwise disjoint cycles. The \emph{cycle type of $\sigma$}, which we will denote by $\CT(\sigma)$ in this paper, is the unique monomial
\[
x_1^{k_1}x_2^{k_2}\cdots x_{|\Omega|}^{k_{|\Omega|}}\in\IQ[x_n:n\geq 1]
\]
where $k_{\ell}$ is the number of length $\ell$ cycles of $\sigma$ for $\ell=1,2,\ldots,|\Omega|$. Hence, $\CT(\sigma)$ encodes the information how many cycles of each given length $\sigma$ has.

Although complete mappings of finite fields have been heavily studied, it is still not well understood which cycle types can be achieved by them. The following two elementary facts are known:
\begin{enumerate}
\item A complete mapping $f$ of an abelian group $G$ cannot have a $2$-cycle. Indeed, if $(x,f(x))$ is a $2$-cycle of $f$, then $(f+\id)(x)=f(x)+x=x+f(x)=f(f(x))+f(x)=(f+\id)(f(x))$, contradicting the injectivity of $f+\id$.
\item An \emph{orthomorphism} of an additive group $G$ is a permutation $f$ of $G$ such that $f-\id:G\rightarrow G,g\mapsto f(g)-g$, is also a permutation of $G$. Because a fixed point of $f$ is the same as a pre-image of the neutral element $0_G$ under $f-\id$, each orthomorphism of an arbitrary group has precisely one fixed point. Note that if $G$ has exponent $2$ (which happens precisely when $G$ is the underlying additive group of a field of characteristic $2$), then complete mappings of $G$ are the same as orthomorphisms of $G$, and so complete mappings of $G$ must have precisely one fixed point then.
\end{enumerate}
Note that these conditions allow one to refute certain cycle types as possible for complete mappings of a finite field $\IF_q$. On the other hand, we also have some knowledge of cycle types that are possible:
\begin{enumerate}
\item A complete mapping (or orthormorphism) of cycle type $x_1x_{\ell}^{(q-1)/\ell}$ is called \emph{$\ell$-regular} (see \cite{FGT81a}, whence this terminology apparently originated). Because a scalar multiplication $\IF_q\rightarrow\IF_q, x\mapsto ax$ for fixed $a\in\IF_q$, is a complete mapping of $\IF_q$ if and only if $a\notin\{0,-1\}$, we see that $\ell$-regular complete mappings of $\IF_q$ exist for every divisor $\ell$ of $q-1$ such that
\[
\ell\not=\begin{cases}2, & \text{if }2\nmid q, \\ 1, & \text{if }2\mid q.\end{cases}
\]
Other examples of $\ell$-regular complete mappings have been studied by various authors, see \cite{CWZ17a,GGM01a,Mit95a,Mit97a,NKQW20a,WLW20a}.
\item In their earlier paper \cite{BW21a}, the authors derived existence results for cycle types of complete mappings of $\IF_q$ that are \emph{generalized cyclotomic mappings of $\IF_q$} -- functions which fix $0$ and restrict to a monomial function $x\mapsto a_ix^{r_i}$ on each coset $C_i$ of a fixed subgroup $C$ of $\IF_q^{\ast}$.
\item Over a finite field of characteristic $p>2$, complete mappings need not have any fixed points. A trivial example for this are the functions $\IF_q\rightarrow\IF_q$, $x\mapsto x+c$ with $c\in\IF_q^{\ast}$ fixed, which have cycle type $x_p^{q/p}$. For a less trivial class of examples, see \cite[Theorem 9]{MP14a}.
\end{enumerate}

Our goal in this paper is to study an additive analogue of the multiplicative approach of \cite{BW21a}. That is, we will consider the following notion of a function that is defined via additive cosets:
\begin{definition}\label{cosetWiseDef}
Let $K$ be a field, let $V$ be a $K$-vector space, and let $W$ be a $K$-subspace of $V$. A function $f:V\rightarrow V$ is called \emph{$W$-coset-wise $K$-affine} if for each coset $C$ of $W$ in $V$, there is a vector $v_C\in V$ and an endomorphism $\varphi_C\in\End(V)$ with $\varphi_C(W)\subseteq W$ such that for all $x\in C$, one has $f(x)=\varphi_C(x)+v_C$.
\end{definition}
In our application, we will have $V=\IF_q=\IF_{p^k}$ and $K=\IF_p$, but it is more convenient to discuss the problem in terms of vector spaces (rather than fields). Our main result, Theorem \ref{mainTheo} below, is a means of obtaining the existence of cycle types of complete mappings of finite-dimensional $\IF_p$-vector spaces from known cycle types of complete mappings on $\IF_p$-vector spaces of smaller dimension. Before we formulate it, we introduce a bit of notation:
\begin{notation}\label{mainNot}
We introduce the following pieces of notation:
\begin{enumerate}
\item If $X$ is a set of permutations of a given finite set $\Omega$, then we set
\[
\CT(X):=\{\CT(\sigma):\sigma\in X\}.
\]
\item Let $\ell$ be a positive integer. The \emph{$\ell$-blow-up function} is the unique $\IQ$-algebra endomorphism $\BU_{\ell}$ of $\IQ[x_n:n\geq1]$ such that $\BU_{\ell}(x_n)=x_{\ell n}$ for all $n\in\IN^+$.
\item Let $d$ be a positive integer and $q$ be a prime power.
\begin{enumerate}
\item Recall that $\GL_d(q)$ denotes the group of invertible $(d\times d)$-matrices over $\IF_q$.
\item If $M$ is a $(d\times d)$-matrix over $\IF_q$ and $v\in\IF_q^d$ a row vector, then the function $\lambda(M,v):\IF_q^d\rightarrow\IF_q^d$, $x\mapsto xM+v$, is called an \emph{$\IF_q$-affine map of $\IF_q^d$}. An $\IF_q$-affine map $\lambda(M,v)$ is a permutation of $\IF_q^d$ if and only if $M$ is invertible, and the $\IF_q$-affine maps that are permutations form a permutation group on $\IF_q^d$ denoted by $\AGL_d(q)$.
\item We denote by $\CGL_d(q)$ the set of all matrices $A\in\GL_d(q)$ that do not have $-1$ as an eigenvalue (equivalently, that represent an $\IF_q$-linear complete mapping of $\IF_q^d$).
\item We set $\ACGL_d(q):=\{\lambda(A,v):A\in\CGL_d(q),v\in\IF_q^d\}\subseteq\AGL_d(q)$.
\end{enumerate}
\item Let $p$ be a prime, and let $d$ and $\ell$ be positive integers. We set
\[
\Gamma(d,p,\ell):=
\begin{cases}
\CT(\ACGL_d(p)), & \text{if }\ell=1, \\
\CT(\AGL_d(p)), & \text{if }\ell\geq2\text{ and }(d,p)\not=(1,2),(1,3),(2,2), \\
\emptyset, & \text{if }\ell\geq2\text{ and }(d,p)=(1,2), \\
\{x_1^3,x_3\}, & \text{if }\ell\geq2\text{ and }(d,p)=(1,3), \\
\{x_1^4,x_2^2,x_1x_3\}, & \text{if }\ell\geq2\text{ and }(d,p)=(2,2).
\end{cases}
\]
\end{enumerate}
\end{notation}
Section \ref{sec2} of this paper contains information on the cycle types of $\IF_q$-affine permutations of $\IF_q^d$. In principle, it is possible to compute the sets $\CT(\AGL_d(q))$ and $\CT(\ACGL_d(q))$ for each pair $(d,q)$ from this.

We are now ready to formulate the main result of this paper:
\begin{theorem}\label{mainTheo}
Let $p$ be a prime, and let $d$ and $t$ be positive integers. Assume that $x_1^{k_1}x_2^{k_2}\cdots x_{p^t}^{k_{p^t}}$ is the cycle type of a complete mapping of $\IF_p^t$. For $\ell=1,2,\ldots,p^t$ and $i=1,2,\ldots,k_{\ell}$, let $\gamma_{\ell,i}\in\Gamma(d,p,\ell)$. Then for every $d$-dimensional subspace $W$ of $\IF_p^{d+t}$, the $\IF_p$-vector space $\IF_p^{d+t}$ admits a $W$-coset-wise $\IF_p$-affine complete mapping of the cycle type
\[
\prod_{\ell=1}^{p^t}\prod_{i=1}^{k_{\ell}}{\BU_{\ell}(\gamma_{\ell,i})}.
\]
\end{theorem}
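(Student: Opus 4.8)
The plan is to exploit the wreath-product structure that the paper announces for $W$-coset-wise $\IF_p$-affine functions. Fix a $d$-dimensional subspace $W\subseteq\IF_p^{d+t}$ and identify the quotient $\IF_p^{d+t}/W$ with $\IF_p^t$, so that the $p^t$ cosets of $W$ are indexed by the points of $\IF_p^t$. A $W$-coset-wise $\IF_p$-affine function is then determined by two pieces of data: a permutation $\bar f$ of the coset-index space $\IF_p^t$ (the induced action on $\IF_p^{d+t}/W$), together with, for each coset $C$, an $\IF_p$-affine map $x\mapsto\varphi_C(x)+v_C$ sending $C$ bijectively to $\bar f(C)$. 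The first step is to record precisely how $f$ being a \emph{complete mapping} of $\IF_p^{d+t}$ decomposes across this structure, i.e.\ to pin down the base-level condition on $\bar f$ and the fiber-level conditions on the affine maps. I expect the paper's earlier characterization of complete mappings among coset-wise affine functions to supply exactly this; I would invoke it rather than rederive it.

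The central idea is to build $f$ so that its cycle structure is a controlled composite of a ``base'' cycle type and per-cycle ``fiber'' cycle types. Take $\bar f$ on $\IF_p^t$ to be a complete mapping of the stated cycle type $x_1^{k_1}\cdots x_{p^t}^{k_{p^t}}$. Now process $\bar f$ one orbit at a time: for each $\ell$-cycle of $\bar f$ (indexed by $i=1,\dots,k_\ell$), I want the $\ell$ fibers sitting over that cycle to be permuted by $f$ in a way whose induced permutation of the $p^d$ points in a single fiber has cycle type $\gamma_{\ell,i}$, while the blow-up $\BU_\ell$ records the fact that a point's $f$-orbit must traverse all $\ell$ fibers before closing up. Concretely, along an $\ell$-cycle $C_0\mapsto C_1\mapsto\cdots\mapsto C_{\ell-1}\mapsto C_0$ I would choose the affine maps $C_{j}\to C_{j+1}$ for $j<\ell-1$ to be arbitrary-but-convenient bijections, and absorb all the nontrivial behaviour into the single ``return'' map; the $\ell$-fold composite around the cycle, read inside one fiber $\cong\IF_p^d$, is an affine map $\lambda(A,v)$ of $\IF_p^d$ whose cycle type I prescribe to be $\gamma_{\ell,i}$, and whose cycles each blow up to cycles of $\ell$ times the length in $f$ (a length-$m$ cycle of the return map yields a length-$\ell m$ cycle of $f$). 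That is exactly what $\BU_\ell(\gamma_{\ell,i})$ encodes, so the total cycle type of $f$ becomes $\prod_{\ell}\prod_i\BU_\ell(\gamma_{\ell,i})$ as claimed.

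The step requiring care is ensuring the completeness of the \emph{whole} map $f$, not merely that $f$ is a permutation. The set $\Gamma(d,p,\ell)$ is tailored to this: for $\ell=1$ (fixed cosets of $\bar f$) it is $\CT(\ACGL_d(p))$, forcing the return map to be an $\IF_p$-linear-plus-translation complete mapping of $\IF_p^d$, whereas for $\ell\geq2$ it relaxes to $\CT(\AGL_d(p))$ (with the three small exceptional pairs listed separately). I would therefore argue that when $\bar f$ moves a coset ($\ell\geq2$), the requirement $f+\id$ be injective \emph{across} the cycle is automatically met because $f(x)+x$ lands in the coset $\bar f(C)+C$, which differs from $x$'s coset; so no local ``no eigenvalue $-1$'' obstruction arises on those fibers, and an arbitrary affine bijection suffices. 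Only on the fixed cosets ($\ell=1$) does the additive collision $f(x)+x\in C$ force the stronger $\ACGL$ condition. Verifying that $\bar f+\id$ being a permutation of $\IF_p^t$ (which holds since $\bar f$ is a complete mapping) rules out all \emph{inter-coset} collisions of $f+\id$ is the heart of the matter; once that is clean, the intra-coset collisions are exactly controlled by the $\Gamma(d,p,\ell)$ hypotheses.

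The main obstacle I anticipate is the small-case bookkeeping encoded in the exceptional clauses $(d,p)=(1,2),(1,3),(2,2)$ of $\Gamma(d,p,\ell)$: for these tiny fibers the generic claim ``any affine bijection works on a moved coset'' can fail or the available cycle types on $\IF_p^d$ are too few, so the realizable fiber cycle types must be enumerated by hand and matched against what the completeness condition across a moved cycle actually permits. Handling those finitely many cases explicitly, and confirming the blow-up product exhausts precisely the advertised cycle type with no hidden interaction between distinct base orbits, is where the real work lies; the generic case should follow directly from the wreath-product description and the completeness characterization assumed from earlier in the paper.
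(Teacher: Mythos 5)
Your overall architecture (wreath-product decomposition, per-cycle forward products, blow-up of cycle types) matches the paper's, but the step you flag as "the heart of the matter" is resolved incorrectly, and the error is fatal to the argument as written. You claim that when $\bar f$ moves a coset ($\ell\geq 2$), no "no eigenvalue $-1$" condition is needed on the fiber maps, because $f(x)+x$ lands in a coset different from that of $x$. This is false. Write a coset as $C=W+u$ and the fiber map as $w+u\mapsto w^{\alpha_u}+u^{\bar f}+\omega_u$. Then $f+\id$ sends all of $C$ into the \emph{single} coset $W+(u^{\bar f}+u)$, and its restriction to $C$ is $w\mapsto w^{\alpha_u+\id}+\mathrm{const}$. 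If $\alpha_u$ has eigenvalue $-1$, this restriction is not injective, so $f+\id$ fails to be injective \emph{within} $C$ — it is irrelevant that the image coset differs from $C$; you have conflated fixed-point-freeness with injectivity. The correct characterization (the paper's Proposition \ref{basicProp}(2)) is that $f$ is a complete mapping of $V$ if and only if the base map is a complete mapping of $U$ \emph{and every single fiber map} restricts to a complete automorphism of $W$, for fixed and moved cosets alike.

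This changes the whole complexion of the problem. Since each factor along an $\ell$-cycle must lie in $\CGL_d(p)$, the forward cycle product is constrained to lie in $\CGL_d(p)^{(\ell)}=\{A_1\cdots A_\ell : A_i\in\CGL_d(p)\}$, not to be an arbitrary element of $\GL_d(p)$ as your "absorb everything into the return map, make the other legs arbitrary bijections" construction assumes. The reason $\Gamma(d,p,\ell)=\CT(\AGL_d(p))$ for $\ell\geq 2$ outside the three exceptional pairs is the genuinely nontrivial fact that $\CGL_d(p)^{(2)}=\GL_d(p)$ there (the paper's Proposition \ref{cglProp}), proved by a counting argument for $p>2$ and a rather long induction on primary rational canonical forms for $p=2$; this is the bulk of Section \ref{sec3} and is entirely missing from your proposal. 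Relatedly, the exceptional clauses for $(d,p)=(1,2),(1,3),(2,2)$ are not small-fiber bookkeeping about "what completeness across a moved cycle permits" — they are precisely the cases where $\CGL_d(p)^{(\ell)}\subsetneq\GL_d(p)$ (e.g.\ $\CGL_1(2)=\emptyset$, and $\CGL_2(2)$ generates only a subgroup of order $3$). To repair your proof you would need to replace your third paragraph by Proposition \ref{basicProp}(2) and then prove the product decomposition result for $\CGL_d(p)$; with those two ingredients your construction and the blow-up bookkeeping go through as in the paper's Theorem \ref{mainTheoExplicit}.
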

In fact, Theorem \ref{mainTheo} is a compact, inexplicit version of a more elaborate result, Theorem \ref{mainTheoExplicit}, which explicitly describes how to construct a $W$-coset-wise $\IF_p$-affine complete mapping of $\IF_p^{d+t}$ of the specified cycle type from a known complete mapping of $\IF_p^t$ of cycle type $x_1^{k_1}\cdots x_{p^t}^{k_{p^t}}$. For readers that are only interested in constructing permutations (but not necessarily complete mappings) of $\IF_q$ with a given cycle type, we note that there is a simpler analogue of Theorem \ref{mainTheo} (or, rather, its explicit version), where each of the two occurrences of \enquote{complete mapping} is replaced by \enquote{permutation}, and the set $\Gamma(d,p,\ell)$ may be replaced by its superset $\CT(\GL_d(p))$. For more details, see the end of Section \ref{sec4}.

As the formulation of Theorem \ref{mainTheo} is rather technical, it may be hard to gauge its power at a first glance. To demonstrate its usefulness, we derive the following interesting consequence:
\begin{corollary}\label{mainCor1}
Let $q=p^k$ be an odd prime power, and let $S$ be a Sylow $p$-subgroup of the symmetric group $\Sym(q)$. Then every cycle type of an element of $S$ is also the cycle type of a suitable complete mapping of $\IF_q$.
\end{corollary}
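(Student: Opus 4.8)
We need to show: for $q = p^k$ odd, every cycle type of an element of a Sylow $p$-subgroup $S$ of $\Sym(q)$ is the cycle type of a complete mapping of $\IF_q$.

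**Structure of Sylow p-subgroups**

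The Sylow $p$-subgroup of $\Sym(p^k)$ is the iterated wreath product $C_p \wr C_p \wr \cdots \wr C_p$ ($k$ times). Its elements have cycle lengths that are powers of $p$.

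**Connecting to the main theorem**

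The main theorem builds complete mappings via blow-ups. The $\BU_\ell$ operator takes $x_n \mapsto x_{\ell n}$. If $\ell = p^s$ and we blow up a cycle type consisting of powers of $p$, we stay within powers of $p$.

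**Inductive strategy**

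Let me think about how to extract cycle types of $S$ from the theorem's construction.The plan is to induct on $k$, where $q = p^k$, using Theorem \ref{mainTheo} with the parameter choice $d = 1$ at each step. The key structural fact is that a Sylow $p$-subgroup $S$ of $\Sym(p^k)$ is isomorphic to the iterated regular wreath product $C_p \wr C_p \wr \cdots \wr C_p$ ($k$ factors), which is precisely the kind of imprimitive wreath product that governs coset-wise affine functions. Consequently, every cycle type of an element of $S$ is a monomial in which every indeterminate $x_n$ appearing has $n$ a power of $p$; I would first record this and describe the recursive combinatorial rule for which such monomials arise. Concretely, an element of $C_p \wr \Sym(p^{k-1})$ (restricted to the Sylow subgroup) is built from a permutation $\sigma$ of the $p$ blocks together with block-data, and its cycle structure is obtained from that of the lower-level pieces via blow-up by a factor of $p$ together with fusion of blocks into longer cycles.

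The heart of the argument is to match this recursion with the blow-up operation $\BU_\ell$ in Theorem \ref{mainTheo}. Taking $d = 1$, the relevant set is $\Gamma(1, p, \ell)$. For $p$ odd we are always in the generic cases: $\Gamma(1,p,1) = \CT(\ACGL_1(p))$ and, for $\ell \geq 2$, $\Gamma(1,p,\ell) = \CT(\AGL_1(p))$ (the exceptional triples $(1,2),(1,3),(2,2)$ are excluded once $p \geq 5$, and $p = 3$ gives the listed set $\{x_1^3, x_3\}$, which I would handle as a base case or small special instance). The strategy is: given a target cycle type $\tau$ of an element of $S \leq \Sym(p^k)$, peel off the top layer of the wreath product to identify a cycle type $\tau'$ of an element of a Sylow $p$-subgroup of $\Sym(p^{k-1})$, which by the inductive hypothesis is realized by a complete mapping of $\IF_p^{k-1} = \IF_{p^{k-1}}$. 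Then I would apply Theorem \ref{mainTheo} with $t = k-1$ and $d = 1$, choosing the factors $\gamma_{\ell,i}$ from $\Gamma(1,p,\ell)$ so that the product $\prod_\ell \prod_i \BU_\ell(\gamma_{\ell,i})$ equals $\tau$.

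The main obstacle is the bookkeeping of the fusion step: when the top-level permutation $\sigma$ of the $p$ blocks is itself nontrivial (a $p$-cycle, or a product of shorter cycles of $p$-power length), the cycles of the resulting element of $S$ are longer than a naive blow-up by $p$ of the lower cycles, because $\sigma$ glues together $\ell$ copies of a lower-level cycle into one long cycle. I must verify that every such fusion pattern is reproducible by an appropriate choice of $\gamma_{\ell,i}$: namely that $\CT(\AGL_1(p))$ and $\CT(\ACGL_1(p))$ contain enough cycle types (in particular $p$-cycles and full-length cycles on $p$ points, via the additive translations and scalar maps discussed in the introduction) to supply, after the $\BU_\ell$ blow-up, exactly the cycle-length contributions produced by the Sylow recursion. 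I expect this to reduce to checking that for each $\ell$ that is a power of $p$, the single length-$\ell$ cycles and the requisite products of shorter $p$-power cycles all lie in $\Gamma(1,p,\ell)$ — which follows since $\AGL_1(p)$ already contains a $p$-cycle (translation) and the trivial type, and $\BU_\ell$ converts these into the needed blown-up cycles. Once this compatibility is established, assembling $\tau$ as the prescribed product and invoking Theorem \ref{mainTheo} completes the induction.
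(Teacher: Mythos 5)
Your overall strategy coincides with the paper's: induct on $k$, keep $d=1$ and $t=k-1$, realize a \enquote{one level down} cycle type by a complete mapping via the inductive hypothesis, and rebuild the target type through Theorem \ref{mainTheo} using blow-ups. The supporting observations are also right: for odd $p$ the translations and the identity of $\IF_p$ give $\{x_1^p,x_p\}\subseteq\CT(\ACGL_1(p))\subseteq\Gamma(1,p,\ell)$ for every $\ell$, so in particular $p=3$ needs no separate treatment ($\Gamma(1,3,\ell)=\{x_1^3,x_3\}$ already contains everything that is ever used).

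However, what you yourself call \enquote{the main obstacle} -- the fusion bookkeeping -- is precisely the step you never carry out, and without it there is no proof: you never write down which cycle type $\tau'$ of an element of a Sylow $p$-subgroup of $\Sym(p^{k-1})$ is fed into the inductive hypothesis, never specify the $\gamma_{\ell,i}$, and never verify that $\prod_{\ell}\prod_i\BU_{\ell}(\gamma_{\ell,i})$ equals the target type; these steps are replaced by \enquote{I must verify} and \enquote{I expect}. The paper closes this gap with a short explicit computation, which you should compare with your sketch. Write the target type as $\tau=x_1^{a_0}x_p^{a_1}\cdots x_{p^k}^{a_k}$. Since every part of length $>1$ has length divisible by $p$ and $\sum_i a_ip^i=p^k$, one gets $p\mid a_0$. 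Then
\[
\tau':=x_1^{\frac{a_0}{p}+a_1}x_p^{a_2}x_{p^2}^{a_3}\cdots x_{p^{k-1}}^{a_k}
\]
is a partition of $p^{k-1}$ into $p$-power parts, hence the cycle type of a $p$-element of $\Sym(p^{k-1})$, hence of an element of a Sylow $p$-subgroup there (every $p$-element lies in one, and all Sylow $p$-subgroups are conjugate). By the inductive hypothesis $\tau'$ is the cycle type of a complete mapping of $\IF_p^{k-1}$; now choose $\gamma_{1,i}=x_1^p$ for $a_0/p$ of its fixed points and $\gamma_{\ell,i}=x_p$ for every other cycle. Since $\BU_1(x_1^p)=x_1^p$, $\BU_1(x_p)=x_p$ and $\BU_{p^{j-1}}(x_p)=x_{p^j}$, Theorem \ref{mainTheo} returns exactly $\tau$. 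This also dissolves your \enquote{fusion pattern} worry: the only pre-blow-up types ever needed are $x_1^p$ and $x_p$, so nothing about $\CT(\AGL_1(p))$ or $\CT(\ACGL_1(p))$ beyond containment of these two types has to be checked. Finally, a repeated slip worth correcting: with $d=1$ and $t=k-1$, the relevant wreath decomposition has $p^{k-1}$ blocks of size $p$, not \enquote{$p$ blocks}; the inductive hypothesis is applied to the permutation of the $p^{k-1}$ blocks, and the fibre data are affine permutations of $\IF_p$.
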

\begin{proof}[Proof of Corollary \ref{mainCor1} using Theorem \ref{mainTheo}]
We proceed by induction on $k$. For $k=1$, we have that $S$ is cyclic, generated by a $p$-cycle. Hence the only cycle types of elements of $S$ are $x_1^p$ and $x_p$. Since $\id_{\IF_p}$ and the function $\IF_p\rightarrow\IF_p$, $x\mapsto x+1$, are complete mappings of $\IF_p$ of cycle type $x_1^p$ and $x_p$ respectively, the claim is true in this case.

Now assume that $k>1$, and that the claim holds for $p^{k-1}$. Let $x_1^{a_0}x_p^{a_1}x_{p^2}^{a_2}\cdots x_{p^k}^{a_k}$ be the cycle type of an element of $S$. Then
\[
a_0\equiv\sum_{i=0}^k{a_ip^i}=p^k\equiv0\Mod{p}.
\]
The following is the cycle type of an element of a Sylow $p$-subgroup of $\Sym(p^{k-1})$:
\begin{equation}\label{cycleTypeEq}
x_1^{\frac{a_0}{p}+a_1}x_p^{a_2}x_{p^2}^{a_3}\cdots x_{p^{k-1}}^{a_k}.
\end{equation}
By the induction hypothesis, there is a complete mapping of $\IF_p^{k-1}$ of cycle type (\ref{cycleTypeEq}). We will apply Theorem \ref{mainTheo} with $d=1$, $t=k-1$, $x_1^{k_1}\cdots x_{p^t}^{k_{p^t}}$ equal to (\ref{cycleTypeEq}) and
\[
\gamma_{\ell,i}=
\begin{cases}
x_1^p, & \text{if }\ell=1\text{ and }i\in\{1,2,\ldots,\frac{a_0}{p}\}, \\
x_p, & \text{otherwise};
\end{cases}
\]
note that it is possible to choose the $\gamma_{\ell,i}$ like this because $\{x_1^p,x_p\}\subseteq\CT(\ACGL_1(p))\subseteq\Gamma(1,p,\ell)$ for all $\ell\geq1$. Our application of Theorem \ref{mainTheo} shows that
\[
\BU_1(x_1^p)^{\frac{a_0}{p}}\cdot\BU_1(x_p)^{a_1}\cdot\BU_p(x_p)^{a_2}\cdots\BU_{p^{k-1}}(x_p)^{a_k}=x_1^{a_0}x_p^{a_1}x_{p^2}^{a_2}\cdots x_{p^k}^{a_k}
\]
is the cycle type of a suitable complete mapping of $\IF_{p^k}$, as required.
\end{proof}

We note that Corollary \ref{mainCor1} is false for even $q$. In fact, if $q$ is even, then none of the cycle types of elements of a Sylow $2$-subgroup $S$ of $\Sym(q)$ can be the cycle type of a complete mapping of $\IF_q$. This is because all elements of $S$ have an even number of fixed points, whereas complete mappings of $\IF_q$ have precisely one fixed point (see the discussion before Definition \ref{cosetWiseDef}). Of course, this does not mean that Theorem \ref{mainTheo} is useless in characteristic $2$ -- one can still apply it to exhibit various cycle types of complete mappings. However, the situation is more complicated, because one cannot choose $d=1$ in Theorem \ref{mainTheo} if $p=2$ (as $\Gamma(1,2,\ell)=\emptyset$ for all $\ell\geq1$).

Corollary \ref{mainCor1} has the following interesting consequence, with which we conclude this introductory subsection:

\begin{corollary}\label{mainCor2}
Let $q$ be a prime power. The following are equivalent:
\begin{enumerate}
\item $\IF_q$ admits a complete mapping of cycle type $x_q$ (i.e., that permutes the elements of $\IF_q$ in one cycle).
\item $q$ is odd.
\end{enumerate}
\end{corollary}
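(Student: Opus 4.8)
The plan is to prove the two implications separately, with essentially all of the content of the backward direction deferred to Corollary \ref{mainCor1}.

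First I would dispatch the implication (1) $\Rightarrow$ (2) by contraposition, using only the elementary facts recalled before Definition \ref{cosetWiseDef}. Suppose $q$ is even, so that $\IF_q$ has characteristic $2$ and its underlying additive group has exponent $2$. As noted in the introduction, over such a group complete mappings coincide with orthomorphisms, and every orthomorphism has exactly one fixed point. On the other hand, a permutation of cycle type $x_q$ is a single $q$-cycle, which (since $q\geq2$) has no fixed points at all. Hence no complete mapping of $\IF_q$ can have cycle type $x_q$ when $q$ is even, which is precisely the contrapositive of (1) $\Rightarrow$ (2).

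For the converse (2) $\Rightarrow$ (1), I would write $q=p^k$ with $p$ odd and realize $x_q$ as the cycle type of an element of a Sylow $p$-subgroup of $\Sym(q)$, so that Corollary \ref{mainCor1} applies directly. Concretely, pick any $q$-cycle $\sigma\in\Sym(q)$; its order is $q=p^k$, so the cyclic group $\langle\sigma\rangle$ is a $p$-group and is therefore contained in some Sylow $p$-subgroup $S$ of $\Sym(q)$. Since $\CT(\sigma)=x_q$, the monomial $x_q$ is the cycle type of an element of $S$, and Corollary \ref{mainCor1} (whose hypothesis that $q$ be odd is exactly our assumption) then yields a complete mapping of $\IF_q$ of cycle type $x_q$, establishing (1).

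There is no genuine obstacle here: both directions are short, and the only point deserving a moment's care is verifying that a single $q$-cycle really is a $p$-element, so that the Sylow machinery behind Corollary \ref{mainCor1} is available. Equivalently, one could observe that the exact power of $p$ dividing $|\Sym(p^k)|=(p^k)!$ has exponent $(p^k-1)/(p-1)=1+p+\cdots+p^{k-1}\geq k$, whence $p^k$ divides $|S|$ and any element of $S$ of order $p^k$ is forced by a counting of moved points to be a single $p^k$-cycle; but passing through $\langle\sigma\rangle$ is cleaner and avoids this computation.
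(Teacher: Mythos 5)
Your proposal is correct and follows essentially the same route as the paper: the forward implication via the observation that in characteristic $2$ complete mappings are orthomorphisms and hence have exactly one fixed point (which a $q$-cycle lacks), and the backward implication by noting that a $q$-cycle lies in a Sylow $p$-subgroup of $\Sym(q)$ and invoking Corollary \ref{mainCor1}. The only cosmetic difference is that you embed $\langle\sigma\rangle$ into a Sylow subgroup via Sylow's theorem, whereas the paper simply asserts that every Sylow $p$-subgroup contains a $q$-cycle; these are interchangeable.
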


\begin{proof}
Every Sylow $q$-subgroup of $\Sym(q)$ contains a $q$-cycle. Therefore, Corollary \ref{mainCor1} implies that $\IF_q$ has a complete mapping of cycle type $x_q$ if $q$ is odd. On the other hand, if $q$ is even, then as mentioned in the discussion before Definition \ref{cosetWiseDef}, a complete mapping of $\IF_q$ is also an orthomorphism of $\IF_q$ and thus has precisely one fixed point. In particular, the said complete mapping cannot be a $q$-cycle.
\end{proof}

In Section \ref{sec5}, we will explicitly construct, for each prime power $q$, a permutation of $\IF_q$ that is a $q$-cycle and, if $q$ is odd, is a complete mapping of $\IF_q$. This is an application of Theorem \ref{mainTheoExplicit}, the explicit version of Theorem \ref{mainTheo} mentioned above.

\subsection{Some notation used in this paper}\label{subsec1P2}

We denote by $\IN^+$ the set of positive integers. The symmetric group on a set $X$ is denoted by $\Sym(X)$. As is customary in group theory, all group actions in this paper are on the right, and we use the notations $f(x)$ and $x^f$ to denote the value of $x$ under the function $f$ interchangeably. In particular, we consider $\GL_d(q)$ as a group of matrices acting on \emph{row} vectors from $\IF_q^d$ through multiplication on the right ($v^M:=v\cdot M$ for $v\in\IF_q^d$ and $M\in\GL_d(q)$). As a consequence, our companion matrix forms (see formula (\ref{compMatrixEq})) are the transposes of the companion matrices used by Fripertinger \cite{Fri97a} (who worked with the left action of $\GL_d(q)$ on column vectors instead).

\subsection{Overview of this paper}\label{subsec1P3}

In Section \ref{sec2}, we recall the computation of cycle types of affine permutations of finite vector spaces, a problem studied and solved by Fripertinger in \cite{Fri97a}. This is useful for readers who want to use the theorem to construct explicit examples of complete mappings with a prescribed cycle type, as we do in Section \ref{sec5}.

Section \ref{sec3} is concerned with the problem of determining the product sets
\[
\CGL_d(q)^{(\ell)}=\{A_1\cdots A_{\ell}: A_i\in\CGL_d(q)\}
\]
for each triple $(d,q,\ell)$, see Proposition \ref{cglProp}. For us, this is an auxiliary problem for proving Theorem \ref{mainTheo}, but it is also interesting in its own right and has some connections with a group-theoretic problem recently studied by Larsen, Shalev and Tiep in \cite{LST20a}, see the end of Section \ref{sec3}.

In Section \ref{sec4}, we formulate and prove Theorem \ref{mainTheoExplicit}, the above-mentioned stronger (explicit) version of Theorem \ref{mainTheo}, and in Section \ref{sec5}, we explicitly construct one-cycle complete mappings of finite fields of odd characteristic as an application. Section \ref{sec6} concludes the paper with a discussion of a related, but harder problem which we aim to tackle in a follow-up paper.

\section{Cycle types of affine permutations of finite vector spaces}\label{sec2}

Let $q$ be a prime power, and let $V$ be a finite-dimensional $\IF_q$-vector space. Our goal in this section is to discuss how to compute the cycle type of a given affine permutation $\lambda(\alpha,v)$ of $V$, following Fripertinger \cite{Fri97a}, though we will use a slightly different presentation.

For this, we need the so-called primary rational canonical form of finite-dimensional vector space automorphisms, which we now briefly recall. For every field $K$, each endomorphism $\varphi$ of a finite-dimensional $K$-vector space $V$ induces a direct decomposition $V=\oplus_{i=1}^s{W_i}$ of $V$ into $\varphi$-invariant $K$-subspaces $W_i$ such that for $i=1,2,\ldots,s$, the restriction $\varphi_{\mid W_i}$ can be represented, with respect to a suitable basis of $W_i$, by $\Comp(Q_i^{e_i})$ where $Q_i\in K[X]$ is a monic irreducible polynomial, $e_i$ is a positive integer, and $\Comp(P)$ denotes the so-called \emph{companion matrix of the polynomial $P=X^d+a_{d-1}X^{d-1}+\cdots+a_1X+a_0\in K[X]$}, which is the following $(d\times d)$-matrix over $K$:
\begin{equation}\label{compMatrixEq}
\begin{pmatrix}
0 & 1 & 0 & \cdots & 0 & 0 \\
0 & 0 & 1 & \cdots & 0 & 0 \\
\vdots & \vdots & \vdots & \cdots & \vdots & \vdots \\
0 & 0 & 0 & \cdots & 1 & 0 \\
0 & 0 & 0 & \cdots & 0 & 1 \\
-a_0 & -a_1 & -a_2 & \cdots & -a_{d-2} & -a_{d-1}\end{pmatrix}.
\end{equation}
A direct decomposition $V=\oplus_{i=1}^s{W_i}$ of $V$ as described in the last sentence will be referred to as a \emph{$\varphi$-block subspace decomposition of $V$}. Observe that $\Comp(P)$ is the matrix representing the multiplication
\[
R+(P)\mapsto RX+(P)
\]
by $X$ on the quotient algebra $K[X]/(P)$ with respect to the basis $(1+(P),X+(P),X^2+(P),\ldots,X^{d-1}+(P))$. While the $\varphi$-block subspace decomposition $V=\oplus_{i=1}^s{W_i}$ is not necessarily unique, the multiset $\{Q_i^{e_i}:i=1,2,\ldots,s\}$ of powers of monic irreducible polynomials associated with the decomposition is uniquely determined by $\varphi$. The corresponding companion matrices $\Comp(Q_i^{e_i})$ are called the \emph{primary rational canonical blocks of $\varphi$}, and any block diagonal matrix that has these companion matrices as its diagonal blocks is called a \emph{primary rational canonical form of $\varphi$}. Observing that for each monic polynomial $P\in K[X]$, both the characteristic and the minimal polynomial of $\Comp(P)$ is $P$ itself, we conclude that the characteristic polynomial of $\varphi$ is $\prod_{i=1}^s{Q_i^{e_i}}$, whereas its minimal polynomial is $\lcm(Q_i^{e_i}: i=1,2,\ldots,s)$.

It follows that $\varphi$ is an automorphism of $V$ if and only if $Q_i\not=X$ for all $i$, and that $\varphi$ is a complete automorphism of $V$ if and only if $Q_i\not=X,X+1$ for all $i$. Assume henceforth that $\varphi=\alpha$ is an automorphism of $V$. Moreover, let $v=\sum_{i=1}^s{v_i}\in V$, with $v_i\in W_i$ for all $i$. Using that
\[
\CT(\lambda(\alpha,v))=\divideontimes_{i=1}^s{\CT(\lambda(\alpha_{\mid W_i},v_i))},
\]
where $\divideontimes$ is as in \cite[Definition 2.2]{WX93a}, we see that in order to understand the cycle types of (complete) affine permutations of $V$, it suffices to understand the cycle types of affine permutations $\lambda(\gamma,w)$ whose automorphism part $\gamma$ can be represented by a companion matrix of the form $\Comp(Q^e)$ for some positive integer $e$ and some monic irreducible polynomial $Q\in K[X]$ such that $Q\not=X$ (and $Q\not=X+1$ for the complete case). This is also what Fripertinger did in \cite{Fri97a}, though in case $e>1$, he replaced $\Comp(Q^e)$ by a certain similar, so-called \emph{hypercompanion matrix}. He obtained essentially the following result, formulated in terms of polynomial quotient algebras:

\begin{proposition}\label{fripertingerProp}
Let $q>1$ be a power of a prime $p$, let $Q,U\in\IF_q[X]$ with $Q\not=X$ monic irreducible, and let $e$ be a positive integer. Consider the affine permutation
\[
\lambda(X,U): R+(Q^e)\mapsto RX+U+(Q^e)
\]
of $\IF_q[X]/(Q^e)$.
\begin{enumerate}
\item If $Q\not=X-1$, then $\lambda(X,U)$ has the following cycle count (independently of $U$):
\begin{itemize}
\item $1$ fixed point;
\item $\frac{q^{\deg{Q}}-1}{\ord(Q)}$ cycles of length $\ord(Q)$;
\item for each $a=1,2,\ldots,\lceil\log_p(e)\rceil-1$: $\frac{q^{p^{a-1}\deg{Q}}(q^{\deg{Q}p^{a-1}(p-1)}-1)}{p^a\ord(Q)}$ cycles of length $\ord(Q)p^a$; and
\item $\frac{q^{p^{\lceil\log_p(e)\rceil}\deg{Q}}(q^{\deg{Q}(e-p^{\lceil\log_p(e)\rceil-1})}-1)}{p^{\lceil\log_p(e)\rceil}\ord(Q)}$ cycles of length $\ord(Q)p^{\lceil\log_p(e)\rceil}$.
\end{itemize}
\item If $Q=X-1$ and $U+(Q^e)$ is a non-unit in $\IF_q[X]/(Q^e)$, then $\lambda(X,U)$ has the following cycle count:
\begin{itemize}
\item $q$ fixed points;
\item for each $a=1,2,\ldots,\lceil\log_p(e)\rceil-1$: $\frac{q^{p^{a-1}}(q^{p^{a-1}(p-1)}-1)}{p^a}$ cycles of length $p^a$; and
\item $\frac{q^{p^{\lceil\log_p(e)\rceil-1}}(q^{e-p^{\lceil\log_p(e)\rceil-1}}-1)}{p^{\lceil\log_p(e)\rceil}}$ cycles of length $p^{\lceil\log_p(e)\rceil}$.
\end{itemize}
\item If $Q=X-1$, if $U+(Q^e)$ is a unit in $\IF_q[X]/(Q^e)$, and if $e>1$ is not a power of $p$, then $\lambda(X,U)$ has $\frac{q^e}{p^{\lceil\log_p(e)\rceil}}$ cycles, all of length $p^{\lceil\log_p(e)\rceil}$.
\item If $Q=X-1$, if $U+(Q^e)$ is a unit in $\IF_q[X]/(Q^e)$, and if $e$ is a power of $p$ (including the case $e=1$), then $\lambda(X,U)$ has $\frac{q^e}{pe}$ cycles, all of length $pe$.
\end{enumerate}
\end{proposition}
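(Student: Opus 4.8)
The plan is to reduce the entire cycle-count computation to a single $Q$-adic valuation calculation in the local ring $A := \IF_q[X]/(Q^e)$, exploiting the iteration formula for an affine map. Writing $S_n := \sum_{i=0}^{n-1}X^i = (X^n-1)/(X-1) \in A$, one has $\lambda(X,U)^n(R) = X^n R + S_n U$, so $R$ is fixed by $\lambda(X,U)^n$ if and only if $(X^n-1)R = -S_n U$. Using the identity $X^n-1 = (X-1)S_n$, this rewrites uniformly as
\[
S_n\cdot\bigl((X-1)R + U\bigr) = 0 \quad\text{in } A.
\]
Hence $\operatorname{Fix}(\lambda(X,U)^n)$ is controlled entirely by $v_Q(S_n)$ together with the annihilator structure of $A$, whose ideals are exactly the $(Q^j)$ for $0\le j\le e$, the annihilator of an element of valuation $j$ being $(Q^{e-j})$.

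The key lemma I would establish first is the exact value of $v_Q(X^n-1)$ (capped at $e$). Writing $\theta$ for a root of $Q$ in $\IF_{q^{\deg Q}}$, one has $Q\mid X^n-1$ iff $\theta^n=1$ iff $\ord(Q)\mid n$; and since $\ord(Q)$ divides $q^{\deg Q}-1$ and is therefore coprime to $p$, the derivative test shows $\theta$ is a \emph{simple} root of $X^{\ord(Q)}-1$, giving $v_Q(X^{\ord(Q)}-1)=1$. Writing $X^{\ord(Q)} = 1+h$ with $v_Q(h)=1$ and invoking the characteristic-$p$ identity $(1+h)^{p^a} = 1+h^{p^a}$, I would deduce that for $\ord(Q)\mid n$ with $v_p(n)=w$ one has $v_Q(X^n-1)=p^w$ (and $v_Q(X^n-1)=0$ otherwise). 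In particular the order of $\lambda(X,U)$ comes out as $\ord(Q)\,p^{\lceil\log_p e\rceil}$, so that \emph{all} cycle lengths are of the form $\ord(Q)\,p^a$.

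With the valuation in hand I would split into the four stated cases according to whether $X-1$ is a unit (equivalently $Q\ne X-1$) and, when it is not, whether $U$ is a unit. When $Q\ne X-1$ (Case 1), $X-1$ is a unit, so $R\mapsto (X-1)R+U$ is a bijection of $A$ and $\lambda(X,U)$ is conjugate, via translation by its unique fixed point $-(X-1)^{-1}U$, to the purely linear map ``multiplication by $X$''; this immediately explains the independence of $U$, and $\#\operatorname{Fix}(\lambda(X,U)^n)=q^{\deg Q\,\cdot\,\min(v_Q(X^n-1),\,e)}$ follows from the annihilator description above. When $Q=X-1$ one has $\ord(Q)=1$ and $v_Q(S_n)=p^{v_p(n)}-1$; here $R\mapsto(X-1)R+U$ is $q$-to-one onto the coset $U+(Q)$, and a short computation gives $\#\operatorname{Fix}(\lambda(X,U)^n)=q^{\min(p^{v_p(n)},\,e)}$ when $U$ is a non-unit (Case 2), whereas when $U$ is a unit the element $(X-1)R+U$ is always a unit, forcing $S_n=0$ and hence $\#\operatorname{Fix}(\lambda(X,U)^n)\in\{0,q^e\}$ according as $p^{v_p(n)}\le e$ (giving $0$) or $p^{v_p(n)}> e$ (giving $q^e$).

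Finally I would convert these fixed-point counts of powers into cycle counts. Since all cycle lengths lie in the chain $\ord(Q)\mid\ord(Q)p\mid\ord(Q)p^2\mid\cdots$, the number of points of period exactly $\ord(Q)p^a$ equals $\#\operatorname{Fix}(\lambda(X,U)^{\ord(Q)p^a})-\#\operatorname{Fix}(\lambda(X,U)^{\ord(Q)p^{a-1}})$, and dividing by $\ord(Q)p^a$ yields the cycle numbers. Grouping contributions by $\lceil\log_p(\cdot)\rceil$ makes the lower strata telescope into the displayed expressions $\tfrac{q^{p^{a-1}\deg Q}(q^{\deg Q\,p^{a-1}(p-1)}-1)}{p^a\,\ord(Q)}$; the Case 3 versus Case 4 dichotomy is exactly whether $p^{\lceil\log_p e\rceil}$ exceeds $e$ (common period $p^{\lceil\log_p e\rceil}$) or equals it (one extra factor of $p$, giving period $pe$). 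I expect the main obstacle to be the valuation lemma together with the bookkeeping of the translation term $U$ across the case split, and in particular the delicate top stratum $a=\lceil\log_p e\rceil$, where the valuation is capped at $e$: there the count of points of that period is $q^{p^{\lceil\log_p e\rceil-1}\deg Q}\bigl(q^{\deg Q(e-p^{\lceil\log_p e\rceil-1})}-1\bigr)$ rather than what the routine telescoping of the lower strata would suggest.
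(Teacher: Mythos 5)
Your proposal is correct and takes essentially the same route as the paper: both count the fixed points of $\lambda(X,U)^n$ via the linear congruence $R(X^n-1)\equiv -U(1+X+\cdots+X^{n-1})\pmod{Q^e}$ (your annihilator/local-ring packaging, and the conjugation-by-translation trick in Case 1, are only cosmetic variants of the paper's direct solution count), use the same valuation formula for $\nu_Q(X^n-1)$, make the identical four-way case split, and convert fixed-point counts of powers into cycle counts by telescoping along the chain $\ord(Q)\mid\ord(Q)p\mid\ord(Q)p^2\mid\cdots$. One point worth recording: your top-stratum count $q^{p^{\lceil\log_p e\rceil-1}\deg Q}\bigl(q^{\deg Q(e-p^{\lceil\log_p e\rceil-1})}-1\bigr)$ is the correct one, and the exponent $p^{\lceil\log_p(e)\rceil}\deg{Q}$ printed in item (1) of the proposition is a typo --- it is inconsistent both with the total number $q^{e\deg Q}$ of points and with the analogous exponent $p^{\lceil\log_p(e)\rceil-1}$ in item (2) --- so the paper's own inclusion-exclusion argument yields exactly the expression you obtained.
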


Because our presentation differs slightly from the one of Fripertinger (we chose to avoid using hypercompanion matrices for greater uniformity), we give a self-contained proof of Proposition \ref{fripertingerProp} for the reader's convenience:

\begin{proof}[Proof of Proposition \ref{fripertingerProp}]
For each positive integer $\ell$, we have
\[
\lambda(X,U)^{\ell}(R+(Q^e))=RX^{\ell}+U(1+X+X^2+\cdots+X^{\ell-1})+(Q^e)
\]
for all $R\in\IF_q[X]$. Therefore, the number of solutions $R$ modulo $Q^e$ of the congruence
\[
RX^{\ell}+U(1+X+X^2+\cdots+X^{\ell-1})\equiv X\Mod{Q^e}
\]
equals the number of points in $\IF_q[X]/(Q^e)$ that lie on a cycle of $\lambda(X,U)$ whose length divides $\ell$. This congruence is equivalent to
\begin{equation}\label{congruenceEq}
R(X-1)(1+X+\cdots+X^{\ell-1})=R(X^{\ell}-1)\equiv -U(1+X+\cdots+X^{\ell-1})\Mod{Q^e}.
\end{equation}
We now make a case distinction according to the four statements we need to prove:
\begin{enumerate}
\item Case: $Q\not=X-1$. Since $X-1$ is a unit modulo $Q^e$, we conclude that the precise number of solutions modulo $Q^e$ of congruence (\ref{congruenceEq}) is
\[
q^{\deg{Q}\cdot\min(e,\nu_Q(X^{\ell}-1))}.
\]
Observing that
\[
\nu_Q(X^k-1)=\begin{cases}0, & \text{if }\ord(Q)\nmid k, \\ p^{\nu_p(\frac{k}{\ord(Q)})}, & \text{if }\ord(Q)\mid k\end{cases}
\]
we see that the cycle lengths of $\lambda(X,U)$ are $1$ and the numbers of the form $\ord(Q)p^a$ for some $a=0,1,\ldots,\lceil\log_p(e)\rceil$ (those are the values of $\ell$ for which the number of solutions of congruence (\ref{congruenceEq}) is strictly larger than the number of solutions for any proper divisor of $\ell$). Moreover, the number of fixed points of $\lambda(X,U)$ is precisely
\[
q^{\deg{Q}\cdot\min(e,\nu_Q(X-1))}=q^{\deg{Q}\cdot\min(e,0)}=q^0=1,
\]
whereas for $a=0,1,\ldots,\lceil\log_p(e)\rceil$, the number of solutions of congruence (\ref{congruenceEq}) for $\ell=\ord(Q)p^a$ is precisely
\[
q^{\deg{Q}\cdot\min(e,p^a)}.
\]
An inclusion-exclusion counting argument now confirms the asserted cycle count of $\lambda(X,U)$.
\item Case: $Q=X-1$, and $U+(Q^e)$ is a non-unit in $\IF_q[X]/(Q^e)$ (i.e., $Q\mid U$). Then the $Q$-adic valuation of the right-hand side $-U(1+X+X^2+\cdots+X^{\ell-1})$ of congruence (\ref{congruenceEq}) is at least $1+\nu_Q(1+X+X^2+\cdots+X^{\ell-1})$, which is the precise $Q$-adic valuation of the coefficient $X^{\ell}-1$ of the left-hand side of that congruence. It follows that congruence (\ref{congruenceEq}) is solvable for all $\ell$ and, more precisely, its number of solutions modulo $Q^e$ is
\[
q^{\min(e,\nu_Q(X^{\ell}-1))}.
\]
Since $Q=X-1$, we have that
\[
\nu_Q(X^k-1)=p^{\nu_p(k)}
\]
and conclude that the cycle lengths of $\lambda(X,U)$ are just the numbers of the form $p^a$ with $a=0,1,\ldots,\lceil\log_p(e)\rceil$, with precisely $q^{\min(e,p^a)}$ points lying on a cycle whose length divides $p^a$. As in Case (1), an inclusion-exclusion counting argument now yields the asserted cycle count of $\lambda(X,U)$.
\item Case: $Q=X-1$, $U+(Q^e)$ is a unit in $\IF_q[X]/(Q^e)$ (i.e., $Q\nmid U$), and $e>1$ is not a power of $p$. For $\ell=p^{\lceil\log_p(e)\rceil}$, congruence (\ref{congruenceEq}) becomes the universally solvable
\[
0\equiv R(X-1)^{p^{\lceil\log_p(e)\rceil}}\equiv -U(X-1)^{p^{\lceil\log_p(e)\rceil}-1}\equiv0\Mod{(X-1)^e},
\]
so all cycles of $\lambda(X,U)$ have length dividing $p^{\lceil\log_p(e)\rceil}$. On the other hand, if $a\in\{0,1,\ldots,\lceil\log_p(e)\rceil-1\}$ and $\ell=p^a$, then congruence (\ref{congruenceEq}) has no solutions, because the $Q$-adic valuation $p^a$ of the left-hand side coefficient $(T^{p^a}-1)=(T-1)^{p^a}$ is strictly larger than $p^a-1$, the $Q$-adic valuation of the right-hand side $-U(T-1)^{p^a-1}$. It follows that all cycles of $\lambda(X,U)$ have the length $p^{\lceil\log_p(e)\rceil}$, as required.
\item Case: $Q=X-1$, $U+(Q^e)$ is a unit in $\IF_q[X]/(Q^e)$ (i.e., $Q\nmid U$), and $e$ is a power of $p$ (possibly $e=1$). Then an argument analogous to the one of Case (3) shows that all cycles of $\lambda(X,U)$ have length $p^{\lceil\log_p(e)\rceil+1}=pe$.
\end{enumerate}
\end{proof}

The following notation will come in handy in the next example:

\begin{notation}\label{gammaNot}
Let $q$ be a prime power, and let $M$ be an invertible $(n\times n)$-matrix over $\IF_q$.
\begin{enumerate}
\item We denote by $\Gamma(M)=\Gamma_{\IF_q}(M)$ the set of all cycle types of affine permutations of $\IF_q^n$ of the form $\lambda(M,v)$, with $v\in\IF_q^n$.
\item For a non-constant monic polynomial $P\in\IF_q[X]$ with $P(0)\not=0$, we set $\Gamma(P):=\Gamma(\Comp(P))$.
\end{enumerate}
\end{notation}

\begin{example}\label{cycleTypeEx}
Let $q=3$ and $V=\IF_3^7$. Consider the $\IF_3$-endomorphism $\alpha$ of $V$ whose standard matrix is the block diagonal matrix with blocks
\[
\Comp((X-1)^2),\Comp((X-1)^3),\text{ and }\Comp(X^2+X+2).
\]
This diagonal matrix is a primary rational canonical form of $\alpha$. We list the possible cycle types of (complete) affine permutations of $V$ of the form $\lambda(\alpha,v)$ for some $v\in V$.

By Proposition \ref{fripertingerProp}, we have
\[
\Gamma((X-1)^2)=\{x_1^3x_3^2,x_3^3\}, \Gamma((X-1)^3)=\{x_1^3x_3^8,x_9^3\},\text{ and }\Gamma(X^2+X+2)=\{x_1x_8\}.
\]
This yields the following possibilities for $\CT(\lambda(\alpha,v))$ (note that the second and fourth of them are equal), which we computed via an implementation of Wei-Xu's product $\divideontimes$ in GAP \cite{GAP4}:
\begin{itemize}
\item $(x_1^3x_3^2)\divideontimes(x_1^3x_3^8)\divideontimes(x_1x_8)=x_1^9x_3^{78}x_8^9x_{24}^{78}$,
\item $(x_1^3x_3^2)\divideontimes x_9^3\divideontimes(x_1x_8)=x_9^{27}x_{72}^{27}$,
\item $x_3^3\divideontimes(x_1^3x_3^8)\divideontimes(x_1x_8)=x_3^{81}x_{24}^{81}$,
\item $x_3^3\divideontimes x_9^3\divideontimes(x_1x_8)=x_9^{27}x_{72}^{27}$.
\end{itemize}
\end{example}

\section{Products of complete vector space automorphisms}\label{sec3}

In order to prove Theorem \ref{mainTheo}, we will need to understand which elements of $\GL_d(q)$ can be written as products of matrices in $\CGL_d(q)$ with a given number $\ell$ of factors. The following proposition solves this problem:

\begin{proposition}\label{cglProp}
Let $d$ and $\ell$ be positive integers, and let $q$ be a prime power. Set
\[
\CGL_d(q)^{(\ell)}:=\{A_1\cdots A_{\ell}: A_i\in\CGL_d(q)\text{ for }i=1,2,\ldots,\ell\}\subseteq\GL_d(q).
\]
Then
\[
\CGL_d(q)^{(\ell)}=
\begin{cases}
\CGL_d(q), & \text{if }\ell=1, \\
\GL_d(q), & \text{if }\ell\geq2\text{ and }(d,q)\not=(1,2),(1,3),(2,2), \\
\emptyset, & \text{if }\ell\geq2\text{ and }(d,q)=(1,2), \\
\{(1)\}, & \text{if }\ell\geq2\text{ and }(d,q)=(1,3), \\
\left\{\begin{pmatrix}1 & 0 \\ 0 & 1\end{pmatrix},\begin{pmatrix}0 & 1 \\ 1 & 1\end{pmatrix},\begin{pmatrix}1 & 1 \\ 1 & 0\end{pmatrix}\right\}, & \text{if }\ell\geq2\text{ and }(d,q)=(2,2).
\end{cases}
\]
\end{proposition}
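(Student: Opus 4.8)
The plan is to reduce the computation of $\CGL_d(q)^{(\ell)}$ for general $\ell$ to the single case $\ell=2$, to solve that case through an explicit factorization built from the primary rational canonical form recalled in Section~\ref{sec2}, and to dispose of the three small exceptions by direct inspection. For $\ell=1$ the statement is the definition. Throughout I will use that $\CGL_d(q)^{(\ell)}$ is invariant under conjugation (since $\CGL_d(q)$ is, eigenvalues being preserved) and closed under direct sums: if $M'\in\CGL_{d'}(q)^{(\ell)}$ and $M''\in\CGL_{d''}(q)^{(\ell)}$, then $M'\oplus M''\in\CGL_{d'+d''}(q)^{(\ell)}$, via the blockwise products of the corresponding factors. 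The reduction in $\ell$ is cheap: $\CGL_d(q)$ is closed under inverses (if $-1$ is not an eigenvalue of $A$, it is not one of $A^{-1}$), so as soon as $\CGL_d(q)\neq\emptyset$ and $\CGL_d(q)^{(2)}=\GL_d(q)$, an induction on $\ell$ using $M=(MB^{-1})\cdot B$ with a fixed $B\in\CGL_d(q)$ gives $\CGL_d(q)^{(\ell)}=\GL_d(q)$ for every $\ell\geq2$. Since $\CGL_d(q)\neq\emptyset$ for all $(d,q)\neq(1,2)$ (take a scalar other than $0,-1$ if $d=1$, or an irreducible characteristic polynomial of degree $d\geq2$), the whole problem rests on determining $\CGL_d(q)^{(2)}$.

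The key reformulation is that $M\in\CGL_d(q)^{(2)}$ if and only if there is a matrix $A$ with $A$, $A+I$ and $A+M$ all invertible: writing $M=A_1A_2$ with $A=A_1$, the conditions $A_1\in\CGL_d(q)$ and $A_2=A_1^{-1}M\in\CGL_d(q)$ translate (the invertibility of $A_2$ being automatic) into exactly these three nonvanishing determinants. To produce such an $A$ I first treat $M=\Comp(P)$, a single companion block. Identifying $\IF_q^d$ with $\IF_q[X]/(P)$ so that $M$ becomes multiplication by $X$, I look for $A$ among the multiplications by a polynomial $g$; the three conditions then read $\gcd(g,P)=\gcd(g+1,P)=\gcd(g+X,P)=1$. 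By the Chinese Remainder Theorem applied to $P=\prod_i Q_i^{e_i}$, this amounts to choosing in each residue field $\IF_q[X]/(Q_i)$ a value of $\bar g$ avoiding the three elements $0$, $-1$, $-\bar X$. Such a choice exists whenever that residue field has more than three elements, and also when it has exactly three but two of the forbidden values coincide; a short check shows the only obstructions are $Q_i=X-1$ over $\IF_2$ and $Q_i=X+1$ over $\IF_3$, where the forbidden values exhaust the whole field. Combined with direct-sum closure, this already yields $\CGL_d(q)^{(2)}=\GL_d(q)$ for all $d$ when $q\geq4$, and handles every $M$ over $\IF_2$ (resp.\ $\IF_3$) whose characteristic polynomial is coprime to $X-1$ (resp.\ $X+1$).

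What remains are the matrices over $\IF_2$ with eigenvalue $1$ and over $\IF_3$ with eigenvalue $-1$, for which the commuting ansatz breaks down and $A$ must be chosen genuinely non-commuting with $M$. I would first settle the three genuinely exceptional pairs by inspecting the tiny groups $\GL_1(2)$, $\GL_1(3)$ and $\GL_2(2)\cong\Sym(3)$: here $\CGL_1(2)=\emptyset$, $\CGL_1(3)=\{(1)\}$, and $\CGL_2(2)$ consists of the two $3$-cycles, whose pairwise products fill out the alternating subgroup and give exactly the three listed matrices (and for $\ell\geq2$ the values stabilize to the stated sets, as one verifies directly). For all other pairs — every $d\geq2$ over $\IF_3$ and every $d\geq3$ over $\IF_2$ — I would prove the bad primary part is still achievable: exhibit, in low dimension, an explicit $A$ making $A$, $A+I$ and $A+M$ simultaneously invertible, and then propagate to arbitrary dimension through direct-sum closure and an induction on the number and sizes of blocks, absorbing any residual low-dimensional bad part into good blocks. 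The numerical thresholds ($2$ over $\IF_3$, $3$ over $\IF_2$) are precisely what single out $(1,3)$ and $(1,2),(2,2)$ as the only survivors.

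The main obstacle is exactly this last step over $\IF_2$ and $\IF_3$. Because the clean polynomial construction fails precisely when the relevant residue field is too small, one must control the coupled quantity $\det(A+M)$ for an $A$ that need not commute with $M$, and one must certify that fusing small bad parts with good blocks introduces no failures beyond $(1,2)$, $(1,3)$, $(2,2)$. I expect this to require either a careful counting argument for $A$ ranging over a suitable family, or a modest case analysis of single bad Jordan-type blocks together with a dimension induction; pinning down exactly these three exceptions, rather than an a~priori larger list, is the delicate point.
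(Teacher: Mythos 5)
Your reductions are sound: $\ell=1$ is the definition, the passage from $\ell\geq2$ to $\ell=2$ works, the three exceptional pairs are correctly settled by inspection, and your reformulation ($M\in\CGL_d(q)^{(2)}$ iff some $A$ has $A$, $A+I_d$, $A+M$ all invertible) is exactly right -- it generalizes to all $q$ the reformulation the paper only states for $q=2$. Your CRT argument on $\IF_q[X]/(P)$ is also a genuinely different and elegant route to the cases it covers: it gives all of $q\geq4$ (where the paper instead uses a scalar trick for $d=1$ and a counting argument for $d>1$), and all matrices over $\IF_2$ resp.\ $\IF_3$ whose characteristic polynomial is coprime to $X+1$. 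The problem is that what your argument does \emph{not} cover -- matrices over $\IF_3$ with eigenvalue $-1$ ($d\geq2$) and over $\IF_2$ with eigenvalue $1$ ($d\geq3$) -- is precisely the substantive content of the proposition, and you do not prove it: you label it ``the main obstacle'' and offer only a plan. A proposal that stops at the hard core is not a proof.

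Worse, the plan as sketched would fail. Direct-sum closure plus ``absorbing residual low-dimensional bad parts into good blocks'' cannot touch matrices \emph{all} of whose primary blocks are bad: for $-I_d$ over $\IF_3$ every block is $(-1)$, and $(-1)\notin\CGL_1(3)^{(2)}=\{(1)\}$, so there is no good block to absorb anything into; similarly a single unipotent block $\Comp((X+1)^d)$ over $\IF_2$ admits no decomposition at all. These matrices require genuinely non-block-diagonal factorizations, i.e.\ an $A$ that does not commute with $M$, which is where your commuting ansatz structurally breaks down. The paper fills this gap in two quite different ways. For all $q>2$ and $d>1$ (so in particular every matrix over $\IF_3$ with eigenvalue $-1$) it uses a counting argument: bounding the matrices with eigenvalue $-1$ via lines in the $(-1)$-eigenspace gives $|\GL_d(q)\setminus\CGL_d(q)|<\frac{1}{2}|\GL_d(q)|$, so $\CGL_d(q)$ and $A\cdot\CGL_d(q)^{-1}$ must intersect for every $A$, yielding $A=C_1C_2$ with no case analysis whatsoever. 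For $q=2$, $d\geq3$, it runs an induction on $d$ (base cases $3\leq d\leq6$ verified by computer) whose inductive step reduces to the four block-dimension multisets $\{d\}$, $\{1,d-1\}$, $\{1,1,d-2\}$, $\{2,d-2\}$, each handled by an explicit companion/hypercompanion ansatz for $C$ making $A+C$ invertible. You would need arguments of comparable substance for both families; adopting the paper's counting argument would at least dispose of $\IF_3$ cheaply, but over $\IF_2$ there is no shortcut in either approach, and your worry that only ``a careful counting argument or a modest case analysis'' is needed understates what that case actually demands.
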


\begin{proof}
It is clear by definition that $\CGL_d(q)^{(1)}=\CGL_d(q)$, so we may assume that $\ell\geq2$. First, we discuss the three exceptional cases $(d,q)=(1,2),(1,3),(2,2)$. For $(d,q)=(1,2)$, we have $\CGL_d(q)=\CGL_1(2)=\emptyset$, because $\IF_2$ has no complete mappings. This implies $\CGL_1(2)^{(\ell)}=\emptyset$ for all $\ell\in\IN^+$, in particular for $\ell\geq2$. For $(d,q)=(1,3),(2,2)$, one can easily check the asserted equalities for $\ell=2$. In particular, we find that in those cases, $\CGL_d(q)^{(2)}$ is a subgroup of $\GL_d(q)$ containing $\CGL_d(q)$, which implies that $\CGL_d(q)^{(\ell)}=\CGL_d(q)^{(2)}$ for all $\ell\geq2$, as required.

Now we assume in addition to $\ell\geq2$ that $(d,q)\not=(1,2),(1,3),(2,2)$. Under these assumptions, we need to show that $\CGL_d(q)^{(\ell)}=\GL_d(q)$. As in the last paragraph, if we can only show this assertion for $\ell=2$, it is clear that it holds for all $\ell\geq2$. So we will focus on proving that $\CGL_d(q)^{(2)}=\GL_d(q)$. We distinguish a few cases.

First, assume that $d=1$ and $q>3$. Then $\CGL_d(q)=\CGL_1(q)=\{(a): a\in\IF_q^{\ast},a\not=-1\}$. Because $q>3$, we have $|\IF_q^{\ast}|=q-1\geq3$. Hence, for each $a\in\IF_q^{\ast}$, we can choose an element $b\in\IF_q^{\ast}\setminus\{-1,-a\}$. Then $a=b\cdot\frac{a}{b}$ is a representation of $a$ as a product of two elements of $\IF_q^{\ast}$ that are distinct from $-1$. This shows that $\CGL_1(q)^{(2)}=\GL_1(q)$, as required.

Next, assume that $d>1$ and $q>2$. For each one-dimensional $\IF_q$-subspace $L$ of $\IF_q^d$, set
\[
G_L:=\{A\in\GL_d(q): L\text{ is contained in the }(-1)\text{-eigenspace of } A\}.
\]
Observe that for each $L$, we have $|G_L|=\prod_{i=1}^{d-1}{(q^d-q^i)}$, that $\GL_d(q)\setminus\CGL_d(q)\subseteq\bigcup_L{G_L}$, and that $-I_d\in G_L$ for all $L$. Since $\IF_q^d$ has precisely $\frac{q^d-1}{q-1}$ one-dimensional $\IF_q$-subspaces $L$, this implies that
\begin{align*}
&|\GL_d(q)\setminus\CGL_d(q)|\leq\frac{q^d-1}{q-1}\cdot\prod_{i=1}^{d-1}{(q^d-q^i)}-\left(\frac{q^d-1}{q-1}-1\right)= \\
&\frac{1}{q-1}|\GL_d(q)|-\left(\frac{q^d-1}{q-1}-1\right)<\frac{1}{q-1}|\GL_d(q)|\leq\frac{1}{2}|\GL_d(q)|.
\end{align*}
Hence, for each given $A\in\GL_d(q)$, each of the two sets $\CGL_d(q)$ and $A\cdot\CGL_d(q)^{-1}$ has size larger than $\frac{1}{2}|\GL_d(q)|$, whence $|\CGL_d(q)\cap(A\cdot\CGL_d(q)^{-1})|>0$. That is, there are $C_1,C_2\in\CGL_d(q)$ such that $C_1=A\cdot C_2^{-1}$ or, equivalently, $A=C_1C_2\in\CGL_d(q)^{(2)}$. This shows that $\CGL_d(q)^{(2)}=\GL_d(q)$, as required.

This leaves us with the assumptions $d>1$ and $q=2$. Since $q$ is even, we have $-1=1$ in $\IF_q$, whence $\CGL_d(q)=\CGL_d(2)$ consists of exactly those matrices $A\in\GL_d(2)$ that are \emph{fixed-point-free} (henceforth abbreviated to f.p.f.), which is defined to mean that those matrices have no \emph{nonzero} fixed points. We use this to rewrite the assertion that $\CGL_d(2)^{(2)}=\GL_d(2)$ into an easier to handle equivalent form as follows: For a given matrix $A\in\GL_d(2)$, there exist f.p.f.~matrices $C_1,C_2\in\GL_d(2)$ such that $A=C_1C_2$ if and only if there exists an f.p.f.~matrix $C\in\GL_d(2)$ such that $AC^{-1}$ is f.p.f.~as well. Now, $AC^{-1}$ is f.p.f.~if and only if for each vector $v\in\IF_2^d$, the condition $vAC^{-1}=v$, which is equivalent to $vA=vC$, implies $v=\vec{0}$. That is, $AC^{-1}$ is f.p.f.~if and only if $\ker(A-C)=\ker(A+C)=\{\vec{0}\}$, i.e., if and only if $A+C\in\GL_d(2)$.

In view of this, we are done once we have shown the following claim:
\begin{equation}\label{claimEq}
\text{For each }A\in\GL_d(2)\text{, there is an f.p.f. }C\in\GL_d(2)\text{ such that }A+C\in\GL_d(2).
\end{equation}
Observe that since the set $\CGL_d(2)$ of f.p.f.~invertible $(d\times d)$-matrices over $\IF_2$ is closed under conjugation by matrices in $\GL_d(2)$, if claim (\ref{claimEq}) holds for a given $A\in\GL_d(2)$, it also holds for every matrix in the conjugacy class $A^{\GL_d(2)}$. Therefore, it suffices to prove claim (\ref{claimEq}) for only one matrix $A$ per $\GL_d(2)$-conjugacy class.

We prove claim (\ref{claimEq}) by induction on $d\geq3$. One can check with GAP \cite{GAP4} that the claim holds for $3\leq d\leq 6$. We may thus assume that $d\geq7$ and that the claim holds in dimensions $3,4,\ldots,d-1$. Let $A\in\GL_d(2)$ be arbitrary but fixed, and let $\IF_2^d=\bigoplus_{i=1}^s{V_i}$ be an $A$-block subspace decomposition of $\IF_2^d$, with blocks $\Comp(Q_i^{e_i})$.

If it is possible to partition the multiset of primary rational canonical blocks of $A$ into two submultisets such that the sum of the block dimensions of each submultiset lies in $\{3,4,\ldots,d-3\}$, then we are done by the induction hypothesis. Indeed, we then have that $A$ can be written as a block diagonal matrix
\[
\begin{pmatrix}
A_1 & 0 \\
0 & A_2
\end{pmatrix}
\]
such that each diagonal block $A_i$ has dimension $d_i\in\{3,4,\ldots,d-3\}$. The induction hypothesis yields that there exist matrices $C_i\in\CGL_{d_i}(2)$ for $i=1,2$ such that $A_i+C_i\in\GL_{d_i}(2)$. Setting
\[
C:=
\begin{pmatrix}
C_1 & 0 \\
0 & C_2
\end{pmatrix},
\]
we find that $C\in\CGL_d(2)$ and $A+C\in\GL_d(2)$, as required.

Therefore, we may assume that a partition of the multiset $M$ of primary rational canonical blocks of $A$ as described above is not possible. This leaves the following possibilities for the multiset $M'$ of the dimensions of the primary rational canonical blocks of $A$: $\{d\}$, $\{1,d-1\}$, $\{1,1,d-2\}$, and $\{2,d-2\}$. Indeed, $A$ cannot have any primary rational canonical block of a dimension in $\{3,4,\ldots,d-3\}$ -- otherwise, consider the bipartition of $M$ where one partition class is just the singleton consisting of a block of $A$ with dimension in $\{3,4,\ldots,d-3\}$. Therefore, the possible elements of $M'$ are $1$, $2$, $d-2$, $d-1$ and $d$. Since $\sum{M'}=d$, we find that $M'=\{d\}$ if $d\in M'$. Now assume that $d\notin M'$. Since $d\geq7$, we have $d-1>d-2>\frac{1}{2}d$, so $M'$ can only contain one of $d-1$ and $d-2$, and only with multiplicity $1$. In particular, $M'$ must contain at least one of the numbers $1$ and $2$. But since $3\leq d-3$, we have that $M'$ cannot contain both $1$ and $2$ -- otherwise, consider the bipartition of $M$ where one partition class consists of precisely one block each of dimensions $1$ and $2$. Similarly, using that $3\leq d-3$ resp.~$4\leq d-3$, we see that the multiplicity of $1$ resp.~$2$ in $M'$ is at most $2$ resp.~$1$. Hence, $M'$ must contain precisely one of $d-1$ or $d-2$, and it does so with multiplicity $1$. Since all other elements of $M'$ are equal to $1$ or $2$, it now follows that $M'$ is one of the remaining three multisets listed at the beginning of this paragraph.

We now go through the four possibilities for $M'$ in a case distinction.
\begin{enumerate}
\item Case: $M'=\{d\}$. Let $\Comp(Q^e)$ be the unique primary rational canonical block of $A$. First, assume that $Q\not=X+1$. Then $\ord(Q)$ is an odd number greater than $1$, and since $A$ is f.p.f.~and has no $2$-cycles according to Proposition \ref{fripertingerProp}, we find that both $A^{-1}$ and $A^2$ are f.p.f., whence $C:=A^{-1}$ is an f.p.f.~invertible matrix such that $A+C=A+A^{-1}=A^{-1}(A^2+I_d)\in\GL_d(2)$, as required.

Now assume that $Q=X+1$. Then we may assume without loss of generality that $A$ is the hypercompanion matrix
\[
\begin{pmatrix}
1 & 1 & 0 & 0 & \cdots & 0 & 0 \\
0 & 1 & 1 & 0 & \cdots & 0 & 0 \\
\vdots & \vdots & \vdots & \vdots & \cdots & \vdots & \vdots \\
0 & 0 & 0 & 0 & \cdots & 1 & 1 \\
0 & 0 & 0 & 0 & \cdots & 0 & 1
\end{pmatrix}.
\]
For $C$, we make the ansatz of choosing it as a companion matrix
\begin{equation}\label{cAnsatzEq}
C=\Comp(X^d+b_{d-1}X^{d-1}+\cdots+b_1X+b_0)=
\begin{pmatrix}
0 & 1 & 0 & 0 & \cdots & 0 & 0 \\
0 & 0 & 1 & 0 & \cdots & 0 & 0 \\
\vdots & \vdots & \vdots & \vdots & \cdots & \vdots & \vdots \\
0 & 0 & 0 & 0 & \cdots & 0 & 1 \\
b_0 & b_1 & b_2 & b_3 & \cdots & b_{d-2} & b_{d-1}
\end{pmatrix}.
\end{equation}
Note that $C\in\CGL_d(2)$ if and only if $b_0=1$ and $b_1+\cdots+b_{d-1}=1$. Indeed, $C\in\GL_d(2)=\SL_d(2)$ if and only if $b_0=\det{C}=1$, and under this assumption, $C$ is f.p.f.~if and only if the characteristic polynomial $\chi_C=1+b_1X+b_2X^2+\cdots+b_{d-1}X^{d-1}+X^d$ is not divisible by $X+1$ (i.e., does not have $1$ as a root), which is equivalent to $b_1+b_2+\cdots+b_{d-1}=1$.

Now, observe that
\[
A+C=
\begin{pmatrix}
1 & 0 & 0 & \cdots & 0 & 0 \\
0 & 1 & 0 & \cdots & 0 & 0 \\
\vdots & \vdots & \vdots & \cdots & \vdots & \vdots \\
0 & 0 & 0 & \cdots & 1 & 0 \\
b_0 & b_1 & b_2 & \cdots & b_{d-2} & b_{d-1}+1
\end{pmatrix},
\]
and $A+C\in\GL_d(2)$ if and only if the rows of $A+C$ span $\IF_2^d$, which is the case if and only if $b_{d-1}=0$. Hence, if we choose $b_0:=b_1:=1$ and $b_2:=b_3:=\cdots:=b_{d-1}:=0$ in formula (\ref{cAnsatzEq}), the resulting matrix $C$ will be invertible and f.p.f.~and satisfy $A+C\in\GL_d(2)$, as required.
\item Case: $M'=\{1,d-1\}$. Let $\Comp(Q^e)$ be the primary rational canonical block of $A$ of dimension $d-1$. First, assume that $Q\not=X+1$. Then we assume without loss of generality that $A$ is its own primary rational canonical form, i.e.,
\[
A=
\begin{pmatrix}
0 & 1 & 0 & 0 & \cdots & 0 & 0 \\
0 & 0 & 1 & 0 & \cdots & 0 & 0 \\
\vdots & \vdots & \vdots & \vdots & \cdots & \vdots & \vdots \\
0 & 0 & 0 & 0 & \cdots & 1 & 0 \\
a_0 & a_1 & a_2 & a_3 & \cdots & a_{d-2} & 0 \\
0 & 0 & 0 & 0 & \cdots & 0 & 1
\end{pmatrix}
\]
where $Q^e=a_0+a_1X+a_2X^2+\cdots+a_{d-2}X^{d-2}+X^{d-1}$. Note that since $X+1\nmid Q^e$, the sum of all coefficients of $Q^e$ is $1$, i.e., $a_0+a_1+\cdots+a_{d-2}=0$.

For $C$, we choose the ansatz
\begin{align*}
C&=
\begin{pmatrix}
0 & 0 & \cdots & 0 & 1 \\
0 & 0 & \cdots & 1 & 0 \\
\vdots & \vdots & \vdots & \vdots & \vdots \\
0 & 1 & \cdots & 0 & 0 \\
1 & 0 & \cdots & 0 & 0
\end{pmatrix}
\cdot
\begin{pmatrix}
0 & 1 & 0 & \cdots & 0 \\
0 & 0 & 1 & \cdots & 0 \\
\vdots & \vdots & \vdots & \cdots & \vdots \\
0 & 0 & 0 & \cdots & 1 \\
1 & b_1 & b_2 & \cdots & b_{d-1}
\end{pmatrix}
\cdot
\begin{pmatrix}
0 & 0 & \cdots & 0 & 1 \\
0 & 0 & \cdots & 1 & 0 \\
\vdots & \vdots & \vdots & \vdots & \vdots \\
0 & 1 & \cdots & 0 & 0 \\
1 & 0 & \cdots & 0 & 0
\end{pmatrix} \\
&=
\begin{pmatrix}
b_{d-1} & b_{d-2} & \cdots & b_1 & 1 \\
1 & 0 & \cdots & 0 & 0 \\
0 & 1 & \cdots & 0 & 0 \\
\vdots & \vdots & \cdots & \vdots & \vdots \\
0 & 0 & \cdots & 1 & 0
\end{pmatrix}
\end{align*}
with $b_1+\cdots+b_{d-1}=1$ (see the discussion after formula (\ref{cAnsatzEq}) in the previous case). Then
\[
A+C=
\begin{pmatrix}
b_{d-1} & b_{d-2}+1 & b_{d-3} & b_{d-4} & b_{d-5} & \cdots & b_3 & b_2 & b_1 & 1  \\
1 & 0 & 1 & 0 & 0 & \cdots & 0 & 0 & 0 & 0 \\
0 & 1 & 0 & 1 & 0 & \cdots & 0 & 0 & 0 & 0 \\
\vdots & \vdots & \vdots & \vdots & \vdots & \cdots & \vdots & \vdots & \vdots & \vdots \\
0 & 0 & 0 & 0 & 0 & \cdots & 1 & 0 & 1 & 0 \\
a_0 & a_1 & a_2 & a_3 & a_4 & \cdots & a_{d-4} & a_{d-3}+1 & a_{d-2} & 0 \\
0 & 0 & 0 & 0 & 0 & \cdots & 0 & 0 & 1 & 1
\end{pmatrix}.
\]
For $i=1,2,\ldots,d-1$, set
\[
b_i:=
\begin{cases}
a_{d-1-i}, & \text{if }i\not=d-2, \\
a_1+1, & \text{if }i=d-2.
\end{cases}
\]
Note that with this choice of $b_i$, we have $b_1+\cdots+b_{d-1}=a_0+\cdots+a_{d-2}+1=1$, so $C\in\CGL_d(2)$. Moreover, the difference between the first and penultimate rows of $A+C$ is the vector
\[
\begin{pmatrix}
0 & \cdots & 0 & 1 & 0 & 1
\end{pmatrix}.
\]
It follows that
\begin{equation}\label{spanEq}
\RowSpace_{\IF_q}
{
\begin{pmatrix}
1 & 0 & 1 & 0 & \cdots & 0 & 0 & 0 \\
0 & 1 & 0 & 1 & \cdots & 0 & 0 & 0 \\
\vdots & \vdots & \vdots & \vdots & \cdots & \vdots & \vdots & \vdots \\
0 & 0 & 0 & 0 & \cdots & 1 & 0 & 1 \\
0 & 0 & 0 & 0 & \cdots & 0 & 1 & 1
\end{pmatrix}
}\subseteq\im(A+C).
\end{equation}
We claim that the row space in formula (\ref{spanEq}) equals the hyperplane $H$ of $\IF_2^d$ that consists of those vectors whose entry sum is $0$. Indeed, a vector
\[
v=
\begin{pmatrix}
x_0 & x_1 & \cdots & x_{d-1}
\end{pmatrix}
\in\IF_2^d
\]
lies in this row space if and only if there are scalars $\lambda_0,\lambda_1,\ldots,\lambda_{d-2}\in\IF_2$ such that
\begin{align*}
&\begin{pmatrix}
x_0 & x_1 & \cdots & x_{d-1}
\end{pmatrix}
= \\
&\lambda_0
\begin{pmatrix}1 & 0 & 1 & 0 & \cdots & 0 & 0 & 0 \end{pmatrix}
+ \\
&\lambda_1
\begin{pmatrix}0 & 1 & 0 & 1 & \cdots & 0 & 0 & 0\end{pmatrix}
+ \\
&\cdots
+ \\
&\lambda_{d-3}
\begin{pmatrix}0 & 0 & 0 & 0 & \cdots & 1 & 0 & 1\end{pmatrix}
+ \\
&\lambda_{d-2}
\begin{pmatrix}0 & 0 & 0 & 0 & \cdots & 0 & 1 & 1\end{pmatrix},
\end{align*}
which translates to the equation system
\begin{align}\label{hEqSystem}
\notag\lambda_0&=x_0 \\
\notag\lambda_1&=x_1 \\
\notag\lambda_0+\lambda_2&=x_2 \\
\notag\lambda_1+\lambda_3&=x_3 \\
\notag&\vdots \\
\notag\lambda_{d-5}+\lambda_{d-3}&=x_{d-3} \\
\notag\lambda_{d-4}+\lambda_{d-2}&=x_{d-2} \\
\lambda_{d-3}+\lambda_{d-2}&=x_{d-1}
\end{align}
It is not hard to check that the equation system obtained from the system (\ref{hEqSystem}) by deleting the last equation, $\lambda_{d-3}+\lambda_{d-2}=x_{d-1}$, has a unique solution for every choice of $x_0,\ldots,x_{d-2}$, namely the one where for $i=0,1,\ldots,d-2$, one has
\[
\lambda_i=\sum_{j\leq i,2\mid i-j}{x_j};
\]
that is, $\lambda_0=x_0$, $\lambda_1=x_1$, $\lambda_2=x_0+x_2$, $\lambda_3=x_1+x_3$, $\lambda_4=x_0+x_2+x_4$, and so on. Therefore, the whole system (\ref{hEqSystem}) is solvable if and only if
\[
\sum_{j\leq d-3,2\mid d-3-j}{x_j}+\sum_{j\leq d-2,2\mid d-2-j}{x_j}=x_{d-1},
\]
which is equivalent to
\[
x_0+x_1+\cdots+x_{d-1}=0,
\]
i.e., to $v\in H$. Hence, the row space in formula (\ref{spanEq}) is indeed equal to $H$, as asserted. Now, since $\im(A+C)$ contains both $H$ and the vector
\[
\begin{pmatrix}a_0 & a_1 & \cdots & a_{d-4} & a_{d-3}+1 & a_{d-2} & 0\end{pmatrix}\notin H,
\]
we conclude that $\im(A+C)=\IF_2^d$, i.e., that $A+C\in\GL_d(2)$, as required.

Having dealt with the assumption $Q\not=X+1$, let us now assume that $Q=X+1$. Replacing the nontrivial primary Frobenius block $\Comp((X+1)^{d-1})$ of $A$ by its similar hypercompanion matrix, we may assume without loss of generality that
\[
A=
\begin{pmatrix}
1 & 1 & 0 & 0 & \cdots & 0 & 0 & 0 \\
0 & 1 & 1 & 0 & \cdots & 0 & 0 & 0 \\
\vdots & \vdots & \vdots & \vdots & \cdots & \vdots & \vdots & \vdots \\
0 & 0 & 0 & 0 & \cdots & 1 & 1 & 0 \\
0 & 0 & 0 & 0 & \cdots & 0 & 1 & 0 \\
0 & 0 & 0 & 0 & \cdots & 0 & 0 & 1
\end{pmatrix}.
\]
We make the ansatz
\[
C=\Comp(1+b_1X+b_2X^2+\cdots+b_{d-1}X^{d-1}+X^d)=
\begin{pmatrix}
0 & 1 & 0 & \cdots & 0 \\
0 & 0 & 1 & \cdots & 0 \\
\vdots & \vdots & \vdots & \cdots & \vdots \\
0 & 0 & 0 & \cdots & 1 \\
1 & b_1 & b_2 & \cdots & b_{d-1}
\end{pmatrix}
\]
with $b_1+\cdots+b_{d-1}=1$. Then
\[
A+C=
\begin{pmatrix}
1 & 0 & \cdots & 0 & 0 & 0 \\
0 & 1 & \cdots & 0 & 0 & 0 \\
\vdots & \vdots & \cdots & \vdots & \vdots & \vdots \\
0 & 0 & \cdots & 1 & 0 & 0 \\
0 & 0 & \cdots & 0 & 1 & 1 \\
1 & b_1 & \cdots & b_{d-3} & b_{d-2} & b_{d-1}+1
\end{pmatrix}.
\]
If we choose $b_1:=1$ and $b_2:=b_3:=\cdots:=b_{d-1}:=0$, then the rows of $A+C$ span $\IF_2^d$, whence $A+C\in\GL_d(2)$, as required.
\item Case: $M'=\{1,1,d-2\}$. Let $\Comp(Q^e)$ be the unique primary Frobenius block of $A$ of dimension $d-2$. Without loss of generality, we may assume that $A$ is its own primary rational canonical form. Hence, $A$ can be viewed as a block diagonal matrix with diagonal blocks $\Comp(Q^e)$ and $I_2$. By the induction hypothesis and the statement for $(d,q)=(2,2)$, we find that there are matrices $C_1\in\CGL_{d-2}(2)$ and $C_2\in\CGL_2(2)$ such that $\Comp(Q^e)+C_1\in\GL_{d-2}(2)$ and $I_2+C_2\in\GL_2(2)$. The block diagonal matrix $C$ with diagonal blocks $C_1$ and $C_2$ lies in $\CGL_d(2)$ and satisfies $A+C\in\GL_d(2)$, as required.
\item Case: $M'=\{2,d-2\}$. The unique primary Frobenius block $B$ of $A$ with dimension $2$ can be either $\Comp((X+1)^2)=\Comp(X^2+1)$ or $\Comp(X^2+X+1)$. If $B=\Comp(X^2+X+1)$, then $B\in\CGL_2(2)\subseteq\CGL_2(2)^{(2)}$, and we are done by an argument analogous to the one in the previous case. We may thus assume that
\[
B=\Comp(X^2+1)=\begin{pmatrix}0 & 1 \\ 1 & 0\end{pmatrix}\sim\begin{pmatrix}1 & 0 \\ 1 & 1\end{pmatrix}=:B'.
\]
Write the unique primary Frobenius block of $A$ of dimension $d-2$ as $\Comp(Q^e)$. First, assume that $Q\not=X+1$. We may assume without loss of generality that $A$ is the block diagonal matrix with diagonal blocks $\Comp(Q^e)$ and $B'$, i.e., that
\[
A=
\begin{pmatrix}
0 & 1 & 0 & \cdots & 0 & 0 & 0 \\
0 & 0 & 1 & \cdots & 0 & 0 & 0 \\
\vdots & \vdots & \vdots & \cdots & \vdots & \vdots & \vdots \\
0 & 0 & 0 & \cdots & 1 & 0 & 0 \\
a_0 & a_1 & a_2 & \cdots & a_{d-3} & 0 & 0 \\
0 & 0 & 0 & \cdots & 0 & 1 & 0 \\
0 & 0 & 0 & \cdots & 0 & 1 & 1
\end{pmatrix}
\]
where $Q^e=a_0+a_1X+\cdots+a_{d-3}X^{d-3}+X^{d-2}$, and we know that $a_0+\cdots+a_{d-3}=0$. As in the case \enquote{$M'=\{1,d-1\}$}, we make the ansatz
\[
C=
\begin{pmatrix}
b_{d-1} & b_{d-2} & \cdots & b_1 & 1 \\
1 & 0 & \cdots & 0 & 0 \\
0 & 1 & \cdots & 0 & 0 \\
\vdots & \vdots & \cdots & \vdots & \vdots \\
0 & 0 & \cdots & 1 & 0
\end{pmatrix}
\]
with $b_1+\cdots+b_{d-1}=1$. Then
\[
A+C=
\begin{pmatrix}
b_{d-1} & b_{d-2}+1 & b_{d-3} & b_{d-4} & \cdots & b_4 & b_3 & b_2 & b_1 & 1 \\
1 & 0 & 1 & 0 & \cdots & 0 & 0 & 0 & 0 & 0 \\
0 & 1 & 0 & 1 & \cdots & 0 & 0 & 0 & 0 & 0 \\
\vdots & \vdots & \vdots & \vdots & \cdots & \vdots & \vdots & \vdots & \vdots & \vdots \\
0 & 0 & 0 & 0 & \cdots & 1 & 0 & 1 & 0 & 0 \\
a_0 & a_1 & a_2 & a_3 & \cdots & a_{d-5} & a_{d-4}+1 & a_{d-3} & 0 & 0 \\
0 & 0 & 0 & 0 & \cdots & 0 & 0 & 1 & 1 & 0 \\
0 & 0 & 0 & 0 & \cdots & 0 & 0 & 0 & 0 & 1
\end{pmatrix}.
\]
Note that since the last row of $A+C$ is the last standard unit vector in $\IF_2^d$, we have that $A+C$ lies in $\GL_d(2)$ if and only if the $((d-1)\times(d-1))$-matrix $D$ obtained from $A+C$ by deleting the entries from the last column and row lies in $\GL_{d-1}(2)$. Now, we can choose the $b_i$ such that the first row of $D$ is the vector
\[
\begin{pmatrix}
0 & \cdots & 0 & 1 & 0 & 1
\end{pmatrix}
\in\IF_2^{d-1},
\]
and then as in the case \enquote{$M'=\{1,d-1\}$}, we see that $\im(D)=\IF_2^{d-1}$, i.e., that $D\in\GL_{d-1}(2)$, as required.

Finally, assume that $Q=X+1$. Then without loss of generality, we have
\[
A=
\begin{pmatrix}
1 & 1 & 0 & \cdots & 0 & 0 & 0 & 0 \\
0 & 1 & 1 & \cdots & 0 & 0 & 0 & 0 \\
\vdots & \vdots & \vdots & \cdots & \vdots & \vdots & \vdots & \vdots \\
0 & 0 & 0 & \cdots & 1 & 1 & 0 & 0 \\
0 & 0 & 0 & \cdots & 0 & 1 & 0 & 0 \\
0 & 0 & 0 & \cdots & 0 & 0 & 1 & 0 \\
0 & 0 & 0 & \cdots & 0 & 0 & 1 & 1
\end{pmatrix}.
\]
Make the ansatz
\[
C=\Comp(1+b_1X+b_2X^2+\cdots+b_{d-1}X^{d-1}+X^d)=
\begin{pmatrix}
0 & 1 & 0 & \cdots & 0 \\
0 & 0 & 1 & \cdots & 0 \\
\vdots & \vdots & \vdots & \cdots & \vdots \\
0 & 0 & 0 & \cdots & 1 \\
1 & b_1 & b_2 & \cdots & b_{d-1}
\end{pmatrix}
\]
with $b_1+\cdots+b_{d-1}=1$. Then
\[
A+C=
\begin{pmatrix}
1 & 0 & \cdots & 0 & 0 & 0 & 0 \\
0 & 1 & \cdots & 0 & 0 & 0 & 0 \\
\vdots & \vdots & \cdots & \vdots & \vdots & \vdots & \vdots \\
0 & 0 & \cdots & 1 & 0 & 0 & 0 \\
0 & 0 & \cdots & 0 & 1 & 1 & 0 \\
0 & 0 & \cdots & 0 & 0 & 1 & 1 \\
1 & b_1 & \cdots & b_{d-4} & b_{d-3} & b_{d-2}+1 & b_{d-1}+1
\end{pmatrix}.
\]
If we choose $b_i:=0$ for $i=1,2,\ldots,d-2$, and $b_{d-1}:=1$, then $A+C\in\GL_d(2)$, as required.
\end{enumerate}
\end{proof}

There is an interesting connection between the theory developed in this section and the recent group-theoretic paper \cite{LST20a} by Larsen, Shalev and Tiep. More precisely, in \cite[Section 10]{LST20a}, the following question is studied: \enquote{Which transitive permutation groups $S$ that are nonabelian simple as abstract groups have the property that every element of $S$ is a product of two derangements in $S$?}. Using tools from algebraic geometry and character theory, it is shown that $S$ has this property as long as $|S|$ is large enough, see \cite[Theorem 10.2]{LST20a}. Since $\GL_d(2)=\SL_d(2)=\PSL_d(2)$ is nonabelian simple for $d\geq3$, and since the elements of $\CGL_d(2)$ are the derangements of the natural transitive action of $\GL_d(2)$ on $\IF_2^d\setminus\{0\}$, the result \cite[Theorem 10.2]{LST20a} implies that $\CGL_d(2)^{(2)}=\GL_d(2)$ if $d$ is large enough. However, no explicit lower bound on $|S|$ is given in \cite[Theorem 10.2]{LST20a}, whence it is not clear whether \cite[Theorem 10.2]{LST20a} could be used to give a shorter proof of the fact that $\CGL_d(2)^{(2)}=\GL_d(2)$ for all $d\geq3$ than our elementary argument above.

In this context, we also note that our proof of Proposition \ref{cglProp} yields the following:

\begin{proposition}\label{derangementsProp}
Let $d$ be a positive integer, and let $q$ be a prime power. If $(d,q)\not=(1,2),(1,3),(2,2)$, then every element of $\GL_d(q)$ is a product of two fixed-point-free elements of $\GL_d(q)$ (i.e., derangements in the natural transitive action of $\GL_d(q)$ on $\IF_q^d\setminus\{0\}$).
\end{proposition}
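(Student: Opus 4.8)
The plan is to deduce the statement directly from Proposition~\ref{cglProp} by means of an elementary sign trick, so that no new case analysis is needed. First I would record the defining property of a derangement in this action: a matrix $A\in\GL_d(q)$ is fixed-point-free on $\IF_q^d\setminus\{\vec{0}\}$ exactly when $A$ has no eigenvalue equal to $1$, i.e., when $Av\not=v$ for every nonzero $v$. The key observation is that the map $A\mapsto -A$, which is a bijection of $\GL_d(q)$ onto itself, interchanges the eigenvalue $1$ with the eigenvalue $-1$: for a fixed nonzero $v$, one has $Av=v$ if and only if $(-A)v=-v$. Hence $A$ is fixed-point-free if and only if $-A\in\CGL_d(q)$, so that the set of derangements in $\GL_d(q)$ is precisely $\{-C:C\in\CGL_d(q)\}$.

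Next I would pass to products of two factors. Writing two arbitrary derangements as $-C_1$ and $-C_2$ with $C_1,C_2\in\CGL_d(q)$, we obtain $(-C_1)(-C_2)=C_1C_2$, and conversely every product $C_1C_2$ arises this way. Therefore the set of all products of two derangements in $\GL_d(q)$ coincides exactly with $\CGL_d(q)^{(2)}$. (In characteristic $2$ this identity is immediate, since there $-1=1$ forces the derangements and the elements of $\CGL_d(2)$ to be the same matrices, in agreement with the remark made just before the statement.) Since the excluded triples $(d,q)=(1,2),(1,3),(2,2)$ here are precisely those singled out in Proposition~\ref{cglProp}, applying that proposition with $\ell=2$ gives $\CGL_d(q)^{(2)}=\GL_d(q)$ in all remaining cases. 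Thus every element of $\GL_d(q)$ is a product of two derangements, as asserted.

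I do not expect any genuine obstacle here: all the real work has already been carried out in Proposition~\ref{cglProp}, whose long case distinction---in particular the delicate inductive argument in characteristic $2$---supplies the substance. The only points that require a moment's care are checking that $A\mapsto -A$ is compatible with multiplication of the two factors (which it is, up to the cancelling signs) and confirming that the lists of exceptional pairs $(d,q)$ in the two propositions coincide.
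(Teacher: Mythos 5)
Your proof is correct, but it takes a genuinely different route from the paper's. The paper argues in two cases: for $q>2$ it transfers the cardinality bound $|\CGL_d(q)|>\frac{1}{2}|\GL_d(q)|$ (extracted from the proof of Proposition \ref{cglProp}) through the bijection $M\mapsto M+I_d$ between $\CGL_d(q)$ and the set $\OGL_d(q)$ of fixed-point-free invertible matrices, and then re-runs the pigeonhole argument (for any $A$, the sets $\OGL_d(q)$ and $A\cdot\OGL_d(q)^{-1}$ each occupy more than half of $\GL_d(q)$, hence intersect); for $q=2$ it simply notes $\OGL_d(2)=\CGL_d(2)$ and quotes Proposition \ref{cglProp}. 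Your sign trick replaces all of this: since $-I_d$ is central and invertible, $A\mapsto -A$ is a bijection of $\GL_d(q)$ exchanging the eigenvalue conditions, so $\OGL_d(q)=-\CGL_d(q)$, and it is compatible with products of pairs because the two signs cancel, giving that the set of products of two derangements is \emph{exactly} $\CGL_d(q)^{(2)}$; Proposition \ref{cglProp} with $\ell=2$ then finishes all cases at once, with no separation of even and odd characteristic. Your reduction even says something in the excluded cases (e.g.\ for $(d,q)=(1,3)$ the only product of two derangements is the identity, confirming that this exception is necessary), while the paper's counting route yields as a by-product the independently interesting bound $|\OGL_d(q)|>\frac{1}{2}|\GL_d(q)|$ for $q>2$. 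Both arguments, of course, draw all their real strength from Proposition \ref{cglProp}, so nothing is circular in your version.
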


\begin{proof}
If $q>2$, then our proof of Proposition \ref{cglProp} shows that $|\CGL_d(q)|>\frac{1}{2}|\GL_d(q)|$ -- note that this also applies in case $d=1$, although this was not noted explicitly in the proof of Proposition \ref{cglProp}. But $\CGL_d(q)$ is in bijection with the set $\OGL_d(q)$ of all fixed-point-free invertible $(d\times d)$-matrices of $\IF_q$ via the function $M\mapsto M+I_d$. Therefore, $|\OGL_d(q)|>\frac{1}{2}|\GL_d(q)|$, and an argument analogous to the one for \enquote{$d>1$ and $q>2$} in the proof of Proposition \ref{cglProp} shows that every element in $\GL_d(q)$ is a product of two elements of $\OGL_d(q)$, as required.

Now assume that $q=2$. Then $\OGL_d(q)=\CGL_d(q)$, and the result follows from Proposition \ref{cglProp}.
\end{proof}

Recall the cycle type sets $\Gamma(d,p,\ell)$ from Notation \ref{mainNot}(4). The following consequence of Proposition \ref{cglProp} exhibits its connection with Theorem \ref{mainTheo}:

\begin{corollary}\label{cglCor}
Let $p$ be a prime, and let $d$ and $\ell$ be positive integers. Then
\begin{equation}\label{corollaryEq}
\Gamma(d,p,\ell)=\{\CT(\lambda(M,w)): M\in\CGL_d(p)^{(\ell)}\text{ and }w\in\IF_p^d\}.
\end{equation}
\end{corollary}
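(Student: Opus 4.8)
The plan is to deduce the corollary directly from Proposition \ref{cglProp} by matching its five cases for $\CGL_d(p)^{(\ell)}$ against the five cases defining $\Gamma(d,p,\ell)$ in Notation \ref{mainNot}(4). Throughout, write $R$ for the right-hand side of \eqref{corollaryEq}, that is, $R=\{\CT(\lambda(M,w)):M\in\CGL_d(p)^{(\ell)},\ w\in\IF_p^d\}$; the goal is to verify $R=\Gamma(d,p,\ell)$ in each case. The point is that once $\CGL_d(p)^{(\ell)}$ is known as an explicit set of matrices, $R$ is just the set of cycle types of all affine maps with those linear parts, so each case reduces either to an immediate comparison with the definitions of $\CT(\ACGL_d(p))$ and $\CT(\AGL_d(p))$, or to a short hand computation.

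First I would dispatch the three ``structural'' cases. For $\ell=1$, Proposition \ref{cglProp} gives $\CGL_d(p)^{(1)}=\CGL_d(p)$, so by the definitions of $\ACGL_d(p)$ and of $\CT(\cdot)$ in Notation \ref{mainNot}(1) we get $R=\CT(\ACGL_d(p))=\Gamma(d,p,1)$. For $\ell\geq2$ with $(d,p)\notin\{(1,2),(1,3),(2,2)\}$, Proposition \ref{cglProp} gives $\CGL_d(p)^{(\ell)}=\GL_d(p)$, so $R=\CT(\AGL_d(p))=\Gamma(d,p,\ell)$. For $\ell\geq2$ and $(d,p)=(1,2)$, Proposition \ref{cglProp} gives $\CGL_1(2)^{(\ell)}=\emptyset$, whence $R=\emptyset=\Gamma(1,2,\ell)$. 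None of these steps requires any computation of cycle types.

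This leaves the two genuinely exceptional cases, where the cycle types must be computed explicitly. For $(d,p)=(1,3)$ and $\ell\geq2$, Proposition \ref{cglProp} gives $\CGL_1(3)^{(\ell)}=\{(1)\}$, so the relevant maps are the translations $x\mapsto x+w$ of $\IF_3$: these give cycle type $x_1^3$ for $w=0$ and $x_3$ for $w\in\{1,2\}$, so $R=\{x_1^3,x_3\}=\Gamma(1,3,\ell)$. For $(d,p)=(2,2)$ and $\ell\geq2$, Proposition \ref{cglProp} gives $\CGL_2(2)^{(\ell)}=\{I_2,\left(\begin{smallmatrix}0&1\\1&1\end{smallmatrix}\right),\left(\begin{smallmatrix}1&1\\1&0\end{smallmatrix}\right)\}$, and I would run through each matrix together with all four $w\in\IF_2^2$. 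For $M=I_2$ the maps are translations of $\IF_2^2$, contributing $x_1^4$ (for $w=0$) and $x_2^2$ (for $w\neq 0$). The remaining two matrices are $M$ and $M^2=M^{-1}$ where $M$ has order $3$; each has characteristic polynomial $X^2+X+1$, which is irreducible over $\IF_2$ and hence has no root $1$, so $M-I_2$ is invertible. I would then use the standard observation that when the linear part has no eigenvalue $1$, the affine map $\lambda(M,w)$ has the unique fixed point $-w(M-I_2)^{-1}$ and is conjugate, by translation along that fixed point, to the linear map $\lambda(M,0)$; hence all four choices of $w$ yield the cycle type of $\lambda(M,0)$. Since $M$ cyclically permutes the three nonzero vectors of $\IF_2^2$ while fixing $0$, this common cycle type is $x_1x_3$. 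Collecting everything gives $R=\{x_1^4,x_2^2,x_1x_3\}=\Gamma(2,2,\ell)$, finishing the case analysis.

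The whole argument is essentially bookkeeping on top of Proposition \ref{cglProp}; the only real computation is the $(2,2)$ case. Within it, the one substantive idea is the fixed-point/translation-conjugacy reduction, which collapses the four values of $w$ (for each of the two order-$3$ matrices) to a single cycle type. This is also the step I expect to be the main source of error, so I would verify explicitly both the cyclic action of $\left(\begin{smallmatrix}0&1\\1&1\end{smallmatrix}\right)$ on $\IF_2^2\setminus\{0\}$ and the conjugacy $\lambda(M,w)=t_{x_0}\circ\lambda(M,0)\circ t_{-x_0}$ with $x_0=-w(M-I_2)^{-1}$.
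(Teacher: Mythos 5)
Your proposal is correct and follows essentially the same route as the paper: reduce everything to Proposition \ref{cglProp} and match its cases against the definition of $\Gamma(d,p,\ell)$ in Notation \ref{mainNot}(4). The only difference is cosmetic: where the paper disposes of the exceptional cases $(1,2),(1,3),(2,2)$ by citing Propositions \ref{cglProp} and \ref{fripertingerProp} and leaving the check to the reader, you carry out those checks by direct hand computation (translations, plus the standard translation-conjugacy reduction when the linear part has no eigenvalue $1$), and your computations are accurate.
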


\begin{proof}
This is clear for $\ell=1$ by the definitions of $\Gamma(d,p,\ell)$ and $\ACGL_d(p)$. If $\ell\geq2$ and $(d,p)\not=(1,2),(1,3),(2,2)$, then the set on the right-hand side of formula (\ref{corollaryEq}) is equal to $\CT(\AGL_d(p))=\Gamma(d,p,\ell)$ by Proposition \ref{cglProp}. Likewise, for $\ell\geq2$ and $(d,p)\in\{(1,2),(1,3),(2,2)\}$, one can check that the equality holds using the information on the set $\CGL_d(p)^{(\ell)}$ from Proposition \ref{cglProp} and Proposition \ref{fripertingerProp}.
\end{proof}

\section{Proof of Theorem \ref{mainTheo}}\label{sec4}

In this section, we develop the necessary theory for the proof of Theorem \ref{mainTheoExplicit} (a stronger version of Theorem \ref{mainTheo}), with which this section will be concluded. First, we discuss some generalities concerning coset-wise affine functions.

Let $K$ be a field, let $V$ be a finite-dimensional $K$-vector space, and let $W$ be a $K$-subspace of $V$. Throughout the rest of this discussion, we fix a complement $U$ of $W$ in $V$, so that $V=W\oplus U$. For $u\in U$, we set $W_u:=W+u$. Then the sets $W_u$ for the various $u\in U$ are the cosets of $W$ in $V$. For $U$-indexed families $\vec{v}=(v_u)_{u\in U}$ and $\vec{\varphi}=(\varphi_u)_{u\in U}$, of vectors in $V$ and $K$-endomorphisms of $V$ stabilizing $W$ respectively, we denote by $f_{\vec{\varphi},\vec{v}}$ the $W$-coset-wise $K$-affine function of $V$ such that for each $u\in U$, one has $f(x)=x^{\varphi_u}+v_u$ for all $x\in W_u$. Moreover, we denote by $g_{\vec{\varphi},\vec{v}}$ the unique function $U\rightarrow U$ such that for each $u\in U$, one has
\[
f(W_u)\subseteq W_{g_{\vec{\varphi},\vec{v}}(u)}.
\]

\begin{proposition}\label{basicProp}
With notation as fixed above, the following hold:
\begin{enumerate}
\item $f_{\vec{\varphi},\vec{v}}$ is a permutation of $V$ if and only if $\varphi_u$ restricts to an automorphism of $W$ for all $u\in U$ and $g_{\vec{\varphi},\vec{v}}$ is a permutation of $U$.
\item $f_{\vec{\varphi},\vec{v}}$ is a complete mapping of $V$ if and only if $\varphi_u$ restricts to a complete automorphism of $W$ for all $u\in U$ and $g_{\vec{\varphi},\vec{v}}$ is a complete mapping of $U$.
\end{enumerate}
\end{proposition}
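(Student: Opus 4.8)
The plan is to prove (1) directly via a fibered-bijection argument for the commutative square relating $f := f_{\vec{\varphi},\vec{v}}$ to $g := g_{\vec{\varphi},\vec{v}}$, and then to deduce (2) by observing that $f+\id$ is \emph{again} a coset-wise affine function, to which part (1) applies. Let $\pi\colon V\rightarrow U$ be the $K$-linear projection onto $U$ along $W$, so that $\pi(W_u)=\{u\}$ for every $u\in U$. The defining property $f(W_u)\subseteq W_{g(u)}$ is then exactly the commutativity $\pi\circ f=g\circ\pi$. The second basic observation I will use is that the restriction of $f$ to a single coset is affine with linear part $\varphi_u|_W$: writing $x=w+u$ with $w\in W$, one has $f(w+u)=w^{\varphi_u}+(u^{\varphi_u}+v_u)$, where $w^{\varphi_u}=w^{\varphi_u|_W}\in W$ (as $\varphi_u$ stabilizes $W$) and $u^{\varphi_u}+v_u$ is independent of $w$. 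Hence, identifying $W_u$ and $W_{g(u)}$ with $W$ by subtracting base points, $f|_{W_u}$ becomes the affine map $w\mapsto w^{\varphi_u|_W}+c_u$ of $W$ for a suitable constant $c_u\in W$.

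For (1), I would argue as follows. Assume first that $f$ is a permutation. Injectivity of $f$ forces each $\varphi_u|_W$ to be injective (if $w_1^{\varphi_u}=w_2^{\varphi_u}$ then $f(w_1+u)=f(w_2+u)$), and since $W$ is finite-dimensional this makes each $\varphi_u|_W$ an automorphism of $W$; consequently $f$ maps each coset $W_u$ \emph{onto} $W_{g(u)}$, bijectively. Then $g$ is surjective because $\pi$ and $f$ are, and $g$ is injective because if $g(u_1)=g(u_2)$ with $u_1\neq u_2$, then $W_{u_1}$ and $W_{u_2}$ would both map onto the same coset, contradicting injectivity of $f$. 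Conversely, if every $\varphi_u|_W$ is an automorphism and $g$ is a permutation, then $f$ maps each coset bijectively onto another coset and $g$ permutes the cosets, so $f$ is a bijection (injectivity splits into the within-coset case, handled by $f|_{W_u}$, and the distinct-coset case, handled by injectivity of $g$; surjectivity holds since every coset is $W_{g(u)}$ for some $u$).

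The heart of the matter is (2), and the key point is to track the coset-map of $f+\id$ correctly. Set $h:=f+\id$, i.e.\ $h(x)=f(x)+x$. For $x\in W_u$ one computes $h(x)=x^{\varphi_u}+v_u+x=x^{\varphi_u+\id}+v_u$, and since $\varphi_u+\id$ is a $K$-endomorphism of $V$ stabilizing $W$, this exhibits $h$ as the coset-wise affine function $f_{\vec{\psi},\vec{v}}$ with $\psi_u:=\varphi_u+\id$. Crucially, because $\pi$ is \emph{linear}, $\pi(h(x))=\pi(f(x))+\pi(x)=g(u)+u$, so the coset-map of $h$ is $g_{\vec{\psi},\vec{v}}=g+\id$ (and \emph{not} $g$), while $\psi_u|_W=\varphi_u|_W+\id_W$. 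Now $f$ is a complete mapping precisely when both $f$ and $h$ are permutations; applying part (1) to each and intersecting the two resulting sets of conditions yields exactly that, for every $u$, both $\varphi_u|_W$ and $\varphi_u|_W+\id_W$ are automorphisms of $W$ (i.e.\ $\varphi_u|_W$ is a complete automorphism of $W$), and that both $g$ and $g+\id$ are permutations of $U$ (i.e.\ $g$ is a complete mapping of $U$). This is the asserted equivalence.

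I expect the only genuine obstacle to lie in the bookkeeping for (2): one must resist reusing the coset-map $g$ for $f+\id$ and instead verify, via linearity of $\pi$, that the correct coset-map is $g+\id$. Once this is in place, both parts reduce to the single fibered-bijection argument underlying (1), and the translation between ``automorphism/permutation'' and ``complete automorphism/complete mapping'' is immediate from the definitions recalled in Sections \ref{sec1} and \ref{sec2}.
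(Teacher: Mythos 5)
Your proof is correct. For part (1) it is essentially the paper's own argument with a symmetric emphasis: the paper deduces that each $\varphi_u$ must restrict to a \emph{surjective} endomorphism of $W$ (since $f$ must map each coset $W_u$ onto a full coset of $W$) and then invokes finite-dimensionality, whereas you start from injectivity of $f$ and conclude $\varphi_u|_W\in\Aut(W)$ the same way; both directions then coincide. The genuinely interesting part of your proposal is (2), for which the paper gives no details at all (it states only that \enquote{the proof of statement (2) is similar, and we omit it}). Instead of redoing the coset analysis for $f+\id$ by hand, you observe that $f_{\vec{\varphi},\vec{v}}+\id=f_{\vec{\psi},\vec{v}}$ with $\psi_u=\varphi_u+\id$, which is again a $K$-endomorphism stabilizing $W$, so that $f+\id$ lies in the same class of coset-wise affine functions and part (1) applies to it; and you correctly verify, via linearity of the projection $\pi$, that its coset-map is $g_{\vec{\varphi},\vec{v}}+\id_U$ and \emph{not} $g_{\vec{\varphi},\vec{v}}$. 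Intersecting the two instances of (1) then gives exactly the assertion, since \enquote{$\varphi_u|_W$ and $\varphi_u|_W+\id_W$ both automorphisms} is precisely the definition of a complete automorphism, and likewise \enquote{$g$ and $g+\id$ both permutations} defines a complete mapping of $U$. This reduction buys economy (no duplicated case analysis) and isolates the one bookkeeping point where a \enquote{similar} proof could silently go wrong, namely reusing $g$ as the coset-map of $f+\id$; as a small bonus, your converse in (1), which establishes injectivity and surjectivity separately, remains valid verbatim over an infinite field $K$, where the paper's closing step \enquote{surjective and thus a permutation} implicitly uses finiteness of $V$.
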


\begin{proof}
For statement (1): If $f_{\vec{\varphi},\vec{v}}$ is a permutation of $V$, then the following must hold:
\begin{itemize}
\item $f_{\vec{\varphi},\vec{v}}$ must map each coset of $W$ onto a coset of $W$. For a fixed $u\in U$, the elements of $W_u$ are of the form $w+u$ where $w$ ranges over $W$, and their images under $f_{\vec{\varphi},\vec{v}}$ are of the form
\[
(w+u)^{\varphi_u}+v_u=w^{\varphi_u}+u^{\varphi_u}+v_u.
\]
If this is to assume all values in the coset $W_{u^{\varphi_u}+v_u}$, then $w^{\varphi_u}$ must assume all values in $W$. In other words, $\varphi_u$ must be a surjective $K$-endomorphism of $W$, i.e., a $K$-automorphism by the finiteness of $\dim_K(W)$.
\item $f_{\vec{\varphi},\vec{v}}$ must permute the cosets of $W$ in $V$. In other words, $g_{\vec{\varphi},\vec{v}}$ must be a permutation of $U$, as required.
\end{itemize}
Conversely, assume that each $\varphi_u$ restricts to an automorphism of $W$ and that $g_{\vec{\varphi},\vec{v}}$ is a permutation of $U$. The latter implies that every coset of $W$ in $V$ intersects $\im(f_{\vec{\varphi},\vec{v}})$, and the former that every coset of $W$ intersecting $\im(f_{\vec{\varphi},\vec{v}})$ is fully contained in $\im(f_{\vec{\varphi},\vec{v}})$. Together, this yields that $f_{\vec{\varphi},\vec{v}}$ is surjective and thus a permutation of $V$.

The proof of statement (2) is similar, and we omit it.
\end{proof}

The coset-wise $K$-affine functions $f_{\vec{\varphi},\vec{v}}$ of $V$ that are permutations of $V$ form a permutation group $\CWAff_W(V)$ on $V$, and we want to understand the structure of this permutation group. First, we make a simplification with regard to the involved endomorphisms $\varphi_u$.

Recall from above that for each $u\in U$ and each $w\in W$, we have
\[
f_{\vec{\varphi},\vec{v}}(w+u)=w^{\varphi_u}+u^{\varphi_u}+v_u.
\]
Assume that $\vec{\varphi'}=(\varphi'_u)_{u\in U}$ is a different family of $K$-endomorphisms of $V$ that stabilize $W$, and assume that for each $u\in U$, the restrictions of $\varphi_u$ and $\varphi'_u$ to $W$ are equal. If we set
\[
v'_u:=u^{\varphi_u}+v_u-u^{\varphi'_u}
\]
for each $u\in U$, and we set $\vec{v'}:=(v'_u)_{u\in U}$, then we find that
\[
f_{\vec{\varphi'},\vec{v'}}(w+u)=w^{\varphi'_u}+u^{\varphi'_u}+v'_u=w^{\varphi_u}+u^{\varphi_u}+v_u=f_{\vec{\varphi},\vec{v}}(w+u).
\]
This shows that we still get the full group $\CWAff_W(V)$ if we restrict to only such coset-wise $K$-affine permutations $f_{\vec{\varphi},\vec{v}}$ of $V$ where each $\varphi_u$ is an automorphism $\gamma_u$ of $V=W\oplus U$ of the form $\alpha_u\oplus\id_U$ for some $\alpha_u\in\Aut(W)$. We will henceforth assume that $\vec{\varphi}=\vec{\gamma}$ is chosen of this form.

For the formulation of the next theorem, we briefly recall the notion of an imprimitive permutational wreath product:

\begin{definition}\label{impWreathDef}
Let $G$ be an abstract group, and let $P\leq\Sym(\Lambda)$ be a permutation group.
\begin{enumerate}
\item The \emph{(abstract) wreath product of $G$ and $P$}, written $G\wr P$, is the abstract group that can be defined as the external semidirect product $P\ltimes G^{\Lambda}$ where $P$ acts on $G^{\Lambda}$, whose elements are $\Lambda$-indexed families $(g_{\lambda})_{\lambda\in\Lambda}$ of elements of $G$, by \enquote{coordinate permutations}. More explicitly, the elements of $G\wr P$ are ordered pairs of the form $(\sigma,\vec{g})$ with $\sigma\in P$ and $\vec{g}=(g_{\lambda})_{\lambda\in\Lambda}\in G^{\Lambda}$, and these elements are multiplied as follows:
\[
(\sigma,(g_{\lambda})_{\lambda\in\Lambda})\cdot(\psi,(h_{\lambda})_{\lambda\in\Lambda}):=(\sigma\psi,(g_{\psi^{-1}(\lambda)}h_{\lambda})_{\lambda\in\Lambda}).
\]
\item If $G\leq\Sym(\Omega)$ is a permutation group, then the abstract group $G\wr P$ is isomorphic to a certain permutation group on $\Omega\times\Lambda$ that is called the \emph{imprimitive (permutational) wreath product of $G$ and $P$} and will be denoted by $G\wr_{\imp}P$. More explicitly, the function $G\wr P\rightarrow\Sym(\Omega\times\Lambda)$ that maps $(\sigma,(g_{\lambda})_{\lambda\in\Lambda})\in G\wr P$ to the permutation
\[
(\omega,\lambda)\mapsto(g_{\sigma(\lambda)}(\omega),\sigma(\lambda))
\]
is an isomorphism of abstract groups between $G\wr P$ and $G\wr_{\imp}P$.
\end{enumerate}
\end{definition}

Moreover, we remind the reader that an \emph{isomorphism of permutation groups} $G\leq\Sym(\Omega)$ and $H\leq\Sym(\Lambda)$ is a bijection $\beta:\Omega\rightarrow\Lambda$ such that $\beta^{-1}G\beta=H$. In this case, the function $G\rightarrow H$, $g\mapsto\beta^{-1}g\beta$, is an isomorphism of abstract groups, and we say that $g\in G$ \emph{corresponds to} $\beta^{-1}g\beta\in H$ under $\beta$.

\begin{theorem}\label{wreathProdTheo}
Consider the bijection
\[
\iota:V=W\oplus U\rightarrow W\times U, w+u\mapsto (w,u).
\]
This bijection is a permutation group isomorphism between $\CWAff_W(V)$ and the imprimitive permutational wreath product $\Aff(W)\wr_{\imp}\Sym(U)$. In fact, if, as above, $\vec{\gamma}=(\gamma_u)_{u\in U}=(\alpha_u\oplus\id_U)_{u\in U}$ and $\vec{v}=(v_u)_{u\in U}$, and if we write $v_u=\omega_u+\nu_u$ with $\omega_u\in W$ and $\nu_u\in U$, then we have that under this isomorphism, the element $f_{\vec{\gamma},\vec{v}}$ of $\CWAff_W(V)$, permuting the cosets $W_u=W+u$ of $W$ in $V$ according to $g_{\vec{\gamma},\vec{v}}\in\Sym(U)$, corresponds to the wreath product element
\begin{equation}\label{wreathProductFormEq}
(g_{\vec{\gamma},\vec{v}},(\lambda(\alpha_u,\omega_u))_{u\in U}).
\end{equation}
\end{theorem}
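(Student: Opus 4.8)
The plan is to exploit that $\iota$ is a bijection, so that the conjugation map $\pi\mapsto\iota^{-1}\pi\iota$ is an isomorphism of abstract groups $\Sym(V)\to\Sym(W\times U)$. Consequently it suffices to prove the set equality $\iota^{-1}\CWAff_W(V)\iota=\Aff(W)\wr_{\imp}\Sym(U)$ inside $\Sym(W\times U)$ and, along the way, to read off the image of each $f_{\vec{\gamma},\vec{v}}$; the isomorphism of permutation groups then follows formally. By the reduction carried out just before the theorem I may assume every element of $\CWAff_W(V)$ is presented as $f_{\vec{\gamma},\vec{v}}$ with $\gamma_u=\alpha_u\oplus\id_U$, $\alpha_u\in\Aut(W)$, and, by Proposition \ref{basicProp}(1), with $g_{\vec{\gamma},\vec{v}}\in\Sym(U)$.

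First I would compute the conjugate directly. For $w\in W$ and $u\in U$ one has $f_{\vec{\gamma},\vec{v}}(w+u)=w^{\alpha_u}+u+\omega_u+\nu_u$, whose $W$-component is $w^{\alpha_u}+\omega_u=\lambda(\alpha_u,\omega_u)(w)$ and whose $U$-component is $u+\nu_u=g_{\vec{\gamma},\vec{v}}(u)$. Applying $\iota$ therefore yields that $\iota^{-1}f_{\vec{\gamma},\vec{v}}\iota$ is the permutation $(w,u)\mapsto(\lambda(\alpha_u,\omega_u)(w),\,g_{\vec{\gamma},\vec{v}}(u))$ of $W\times U$. Comparing this with the action formula of Definition \ref{impWreathDef}(2), I would recognize it as the image of the wreath-product element $(g_{\vec{\gamma},\vec{v}},(\lambda(\alpha_u,\omega_u))_{u\in U})$, which simultaneously proves the inclusion $\iota^{-1}\CWAff_W(V)\iota\subseteq\Aff(W)\wr_{\imp}\Sym(U)$ and establishes the explicit correspondence (\ref{wreathProductFormEq}).

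For the reverse inclusion I would show that every wreath element is hit. Given $(\sigma,(\lambda(\beta_\mu,c_\mu))_{\mu\in U})\in\Aff(W)\wr_{\imp}\Sym(U)$, I would reverse-engineer a preimage by setting $\nu_u:=\sigma(u)-u$ (forcing $g_{\vec{\gamma},\vec{v}}=\sigma$) and choosing the $\alpha_u,\omega_u$ so that the base family matches. Because $\sigma\in\Sym(U)$ and each $\beta_\mu\in\Aut(W)$, Proposition \ref{basicProp}(1) guarantees that the resulting $f_{\vec{\gamma},\vec{v}}$ is a permutation of $V$, hence an element of $\CWAff_W(V)$ whose $\iota$-conjugate is the prescribed wreath element. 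This gives $\Aff(W)\wr_{\imp}\Sym(U)\subseteq\iota^{-1}\CWAff_W(V)\iota$ and completes the argument.

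The only genuinely delicate point, and the step I expect to absorb essentially all of the care, is the index bookkeeping in the wreath action forced by the right-action convention: one must verify against Definition \ref{impWreathDef} whether the affine factor attached to the coset $W_u$ should be recorded at index $u$ or at index $g_{\vec{\gamma},\vec{v}}(u)$ in the tuple, since the multiplication rule in part (1) and the evaluation formula $(\omega,\lambda)\mapsto(g_{\sigma(\lambda)}(\omega),\sigma(\lambda))$ in part (2) together pin this down and an off-by-$g_{\vec{\gamma},\vec{v}}^{-1}$ re-indexing is easy to slip in. Everything else is the routine splitting of $V=W\oplus U$ into its two components, so once the conventions are nailed down the verification is mechanical.
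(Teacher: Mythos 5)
Your proposal is correct and follows essentially the same route as the paper's own proof: the same direct computation showing that $\iota^{-1}f_{\vec{\gamma},\vec{v}}\iota$ is the permutation $(w,u)\mapsto(\lambda(\alpha_u,\omega_u)(w),g_{\vec{\gamma},\vec{v}}(u))$ of $W\times U$, followed by the same reverse-engineering of a preimage (via $\nu_u:=\sigma(u)-u$) to get surjectivity.

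The one point you leave open --- whether the affine factor attached to the coset $W_u$ should be recorded at index $u$ or at index $g_{\vec{\gamma},\vec{v}}(u)$ --- deserves an answer, because it is exactly where the paper itself slips. Definition \ref{impWreathDef}(1) and (2) are mutually consistent, and under the action formula of part (2) the pair $(\sigma,(h_\mu)_{\mu\in U})$ sends $(w,u)$ to $(h_{\sigma(u)}(w),\sigma(u))$; hence the permutation $(w,u)\mapsto(\lambda(\alpha_u,\omega_u)(w),g(u))$, with $g=g_{\vec{\gamma},\vec{v}}$, is the image of
\[
\bigl(g,(\lambda(\alpha_{g^{-1}(u)},\omega_{g^{-1}(u)}))_{u\in U}\bigr)
\]
and \emph{not} of $(g,(\lambda(\alpha_u,\omega_u))_{u\in U})$. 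So formula (\ref{wreathProductFormEq}), and the final equality in the paper's own chain of computations, commit precisely the off-by-$g^{-1}$ re-indexing you warn about. This is harmless for all the substantive claims: re-indexing a family by $g^{-1}$ is a bijection on families, so the conjugation map is still a bijection from $\CWAff_W(V)$ onto $\Aff(W)\wr_{\imp}\Sym(U)$ and both of your inclusions survive verbatim; moreover, in the downstream application via Lemma \ref{polyaLem}, the re-indexing merely shifts each forward cycle product cyclically along its cycle, replacing it by a conjugate, so every cycle type computed in Theorem \ref{mainTheoExplicit} is unchanged. Your proof becomes fully rigorous once you record the base family at the re-indexed positions (or, alternatively, restate both parts of Definition \ref{impWreathDef} in the other, equally valid, convention in which $(\sigma,(h_\mu)_\mu)$ acts by $(w,u)\mapsto(h_u(w),\sigma(u))$, which is the convention the paper's proof implicitly uses).
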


\begin{proof}
For all $w\in W$ and all $u\in U$, we have
\begin{align*}
f_{\vec{\gamma},\vec{v}}(\iota^{-1}((w,u)))&=f_{\vec{\gamma},\vec{v}}(w+u)=w^{\alpha_u}+u+v_u=(w^{\alpha_u}+\omega_u)+(u+\nu_u) \\
&=(w^{\alpha_u}+\omega_u)+g_{\vec{\gamma},\vec{v}}(u)=\iota^{-1}((w^{\alpha_u}+\omega_u,g_{\vec{\gamma},\vec{v}}(u))) \\
&=\iota^{-1}((w,u)^{(g_{\vec{\gamma},\vec{v}},(\lambda(\alpha_u,\omega_u))_{u\in U})}),
\end{align*}
which shows that the diagram
\begin{center}
\begin{tikzpicture}
\matrix (m) [matrix of math nodes,row sep=3em,column sep=4em,minimum width=2em]
  {
     V & V \\
     W\times U & W\times U \\};
  \path[-stealth]
    (m-1-1) edge node[above,shift={(0pt,5pt)}] {$f_{\vec{\gamma},\vec{v}}$} (m-1-2)
    (m-1-1) edge node [left] {$\iota$} (m-2-1)
		(m-1-2) edge node [right] {$\iota$} (m-2-2)
		(m-2-1) edge node [below,shift={(0pt,-5pt)}] {$(g_{\vec{\gamma},\vec{v}},(\lambda(\alpha_u,\omega_u))_{u\in U})$} (m-2-2);
\end{tikzpicture}
\end{center}
is commutative. This shows that the group isomorphism $\Sym(V)=\Sym(W\oplus U)\rightarrow\Sym(W\times U)$, $\sigma\mapsto\iota^{-1}\sigma\iota$, restricts to an injective group homomorphism $\CWAff_W(V)\rightarrow\Aff(W)\wr_{\imp}\Sym(U)$, and it remains to show that this homomorphism is also surjective. Let $(\psi,(\lambda(\alpha_u,\omega_u))_{u\in U})\in\Aff(W)\wr_{\imp}\Sym(U)$. Set $\vec{\gamma}:=(\alpha_u\oplus\id_U)_{u\in U}$ and $\vec{v}:=(\omega_u+u^{\psi}-u)_{u\in U}$. The calculations from the beginning of this proof show that
\[
\iota\circ f_{\vec{\gamma},\vec{v}}\circ\iota^{-1}=\iota^{-1}f_{\vec{\gamma},\vec{v}}\iota=(\psi,(\lambda(\alpha_u,\omega_u))_{u\in U}),
\]
as required.
\end{proof}

The following result concerning cycle types in imprimitive permutational wreath products will be useful in view of Theorem \ref{wreathProdTheo}:

\begin{lemma}\label{polyaLem}
Let $\Omega$ and $\Lambda$ be finite sets, let $G\leq\Sym(\Omega)$ and $H\leq\Sym(\Lambda)$, and let $P:=G\wr_{\imp}H\leq\Sym(\Omega\times\Lambda)$. Consider an element $\sigma=(\psi,(g_{\lambda}))_{\lambda\in\Lambda}\in P$. For each cycle $\zeta=(\lambda_0,\lambda_1,\ldots,\lambda_{\ell-1})$ of $\sigma$ on $\Lambda$, call an element of $G$ of the form $g_{\lambda_0}g_{\lambda_1}\cdots g_{\lambda_{\ell-1}}$ a \emph{forward cycle product of $\sigma$ with respect to $\zeta$}. Since all forward cycle products of $\sigma$ with respect to $\zeta$ are $G$-conjugate to each other, the cycle type $\gamma_{\zeta}(\sigma):=\CT(g_{\lambda_0}\cdots g_{\lambda_{\ell-1}})$ is uniquely determined by $\sigma$ and $\zeta$. Moreover,
\[
\CT(\sigma)=\prod_{\zeta}{\BU_{\ell(\zeta)}(\gamma_{\zeta}(\sigma))},
\]
where $\zeta$ ranges over the cycles of $\psi$ on $\Lambda$, and $\ell(\zeta)$ denotes the length of $\zeta$.
\end{lemma}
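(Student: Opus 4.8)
The plan is to use the block structure that $\psi$ imposes on $\Omega\times\Lambda$. Since the second coordinate of a point is moved only by the top permutation, for each cycle $\zeta$ of $\psi$ on $\Lambda$ the set $\Omega\times\supp(\zeta)$ (writing $\supp(\zeta)$ for the set of points appearing in $\zeta$) is $\sigma$-invariant, and these sets partition $\Omega\times\Lambda$. As the cycle type of a permutation is multiplicative over a decomposition into invariant subsets, it suffices to prove that $\sigma$ restricted to one block $\Omega\times\supp(\zeta)$ has cycle type $\BU_{\ell(\zeta)}(\gamma_{\zeta}(\sigma))$; the displayed product formula is then immediate.

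Next I would fix a cycle $\zeta=(\lambda_0,\ldots,\lambda_{\ell-1})$ of $\psi$, with $\lambda_i^{\psi}=\lambda_{i+1}$ (indices mod $\ell$). A direct iteration of the action formula from Definition \ref{impWreathDef} gives $(\omega,\lambda_0)^{\sigma^{\ell}}=(\omega^{\rho},\lambda_0)$, where $\rho=g_{\lambda_1}g_{\lambda_2}\cdots g_{\lambda_{\ell-1}}g_{\lambda_0}$ is the forward cycle product read off starting one step after $\lambda_0$. This also settles the well-definedness claim: choosing a different base point in $\zeta$ cyclically rotates the product, and any such rotation is $G$-conjugate to $g_{\lambda_0}\cdots g_{\lambda_{\ell-1}}$, for instance $\rho=g_{\lambda_0}^{-1}(g_{\lambda_0}g_{\lambda_1}\cdots g_{\lambda_{\ell-1}})g_{\lambda_0}$. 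Thus all forward cycle products share the single cycle type $\gamma_{\zeta}(\sigma)=\CT(\rho)$, realized by $\rho$ acting on the fiber $\Omega\times\{\lambda_0\}$.

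The core of the argument is a length-preserving-up-to-scaling correspondence between the cycles of $\rho$ on $\Omega$ and the cycles of $\sigma$ on the block. Given $\omega$ lying in a $\rho$-cycle of length $m$, the $\sigma$-orbit of $(\omega,\lambda_0)$ returns to the fiber over $\lambda_0$ only after multiples of $\ell$ steps, and $(\omega,\lambda_0)^{\sigma^{\ell m'}}=(\omega^{\rho^{m'}},\lambda_0)$ first returns to its start exactly when $m\mid m'$; hence this $\sigma$-orbit has length $\ell m$. Over these $\ell m$ steps the second coordinate runs through $\zeta$ precisely $m$ times, so the orbit meets each fiber in exactly $m$ points, and in particular all $m$ points of the given $\rho$-cycle lie in this one $\sigma$-orbit. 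Therefore each $\rho$-cycle of length $m$ produces exactly one $\sigma$-cycle of length $\ell m$; since the $\rho$-cycles partition $\Omega\times\{\lambda_0\}$ and each resulting $\sigma$-orbit contributes $m$ points to every fiber, all of $\Omega\times\supp(\zeta)$ is accounted for. Consequently, if $\gamma_{\zeta}(\sigma)=\prod_n x_n^{c_n}$, then $\sigma$ has $c_n$ cycles of length $\ell n$ on the block, so the block cycle type is $\prod_n x_{\ell n}^{c_n}=\BU_{\ell}(\gamma_{\zeta}(\sigma))$, completing the reduction.

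I expect the main obstacle to be the bookkeeping forced by the right-action convention: pinning down precisely which factors $g_{\lambda_i}$ accumulate as $\sigma$ is iterated (an off-by-one in the starting index is easy to introduce), and, more substantively, verifying that a single $\rho$-cycle of length $m$ yields exactly one $\sigma$-cycle rather than several. The equidistribution of each $\sigma$-orbit across the $\ell$ fibers is what makes these counts consistent, and it is precisely this uniform stretching of every cycle length by the factor $\ell$ that the blow-up operator $\BU_{\ell}$ encodes.
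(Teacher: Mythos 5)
Your proof is correct. It is worth noting, however, that the paper itself does not write out an argument for Lemma \ref{polyaLem}: its ``proof'' consists of the remark that the lemma is a slightly more general version of \cite[Lemma 3.5]{BW21a} (a local version of P\'olya's formula for imprimitive wreath products) and that the proof is analogous to the one given there. Your write-up therefore supplies exactly the argument the paper delegates to that reference, and it is the expected one: split $\Omega\times\Lambda$ into the $\sigma$-invariant blocks $\Omega\times\supp(\zeta)$, iterate the action to identify the restriction of $\sigma^{\ell}$ to a fiber with a forward cycle product $\rho$, and turn each $\rho$-cycle of length $m$ into a single $\sigma$-cycle of length $\ell m$, which is precisely what $\BU_{\ell}$ encodes. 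Two points you handle correctly that are easy to get wrong under the paper's conventions deserve emphasis: first, by Definition \ref{impWreathDef} the group element acting on the first coordinate is indexed by the \emph{image} $\psi(\lambda)$ rather than by $\lambda$, so the accumulated product is $g_{\lambda_1}\cdots g_{\lambda_{\ell-1}}g_{\lambda_0}$, a cyclic rotation of the stated forward cycle product and hence conjugate to it (this simultaneously proves the well-definedness claim); second, the fact that distinct $\rho$-cycles give rise to distinct $\sigma$-cycles, which follows from your observation that a $\sigma$-orbit meets the fiber over $\lambda_0$ in exactly the embedded $\rho$-cycle, so orbits coming from disjoint $\rho$-cycles have disjoint fiber intersections and cannot coincide.
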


\begin{proof}
This is a slightly more general version of \cite[Lemma 3.5]{BW21a}, a \enquote{local} version of P{\'o}lya's celebrated formula for the cycle index of an imprimitive permutational wreath product \cite[table at the bottom of p.~180]{Pol37a}. The proof is analogous to the one of \cite[Lemma 3.5]{BW21a}. Note that we used the notation $\CT^{(\ell)}(\alpha)$ for $\BU_{\ell}(\CT(\alpha))$ in \cite[Lemma 3.5]{BW21a}.
\end{proof}

We now specialize to $K=\IF_p$ for some prime $p$. Using the theory developed thus far, we can formulate and prove Theorem \ref{mainTheoExplicit} below. To make the formulation of the theorem itself more concise, we introduce some notation used in it.

Let $p$ be a prime, and let $d$ and $t$ be positive integers. Assume that $g$ is a complete mapping of $\IF_p^t=:U$ of cycle type $x_1^{k_1}\cdots x_{p^t}^{k_{p^t}}$. For $\ell=1,2,\ldots,p^t$, enumerate the length $\ell$ cycles of $g$ on $U$ as
\[
\zeta_{\ell,i}=(u_{\ell,i,0},u_{\ell,i,1},\ldots,u_{\ell,i,\ell})\text{ for }i=1,2,\ldots,k_{\ell}.
\]
Moreover, for $\ell=1,2,\ldots,p^t$ and $i=1,2,\ldots,k_{\ell}$, choose a cycle type $\gamma_{\ell,i}\in\Gamma(d,p,\ell)$. By Corollary \ref{cglCor}, we can write
\[
\gamma_{\ell,i}=\CT(\lambda(M_{\ell,i,0}\cdots M_{\ell,i,\ell-1},w_{\ell,i}))\in\CT(\AGL_d(p))
\]
for suitable $M_{\ell,i,0},\ldots,M_{\ell,i,\ell-1}\in\CGL_d(p)$ and $w_{\ell,i}\in\IF_p^d=:W$.

For $\ell=1,2,\ldots,p^t$, $i=1,2,\ldots,k_{\ell}$ and $j=0,1,\ldots,\ell-1$, set
\[
\alpha_{u_{\ell,i,j}}:=(w\mapsto wM_{\ell,i,j})\in\Aut_{\IF_p}(W),
\]
set
\[
\omega_{u_{\ell,i,j}}:=\begin{cases}\vec{0}\in\IF_p^d, & \text{if }j<\ell-1, \\ w_{\ell,i}, & \text{if }j=\ell-1,\end{cases}
\]
and set
\[
\nu_{u_{\ell,i,j}}:=u_{l,i,(j+1)\mod{\ell}}-u_{\ell,i,j}.
\]
Observe that this defines the notations $\alpha_u$, $\omega_u$ and $\nu_u$ uniquely for each $u\in U$. Consider the $\IF_p$-vector space $V=\IF_p^{d+t}=\IF_p^d\oplus\IF_p^t=W\oplus U$. Set
\[
\vec{\gamma}:=(\alpha_u\oplus\id_U)_{u\in U}\in\Aut_{\IF_p}(V)^U\text{ and }\vec{v}:=(\omega_u+\nu_u)_{u\in U}\in V^U.
\]

\begin{theorem}\label{mainTheoExplicit}
With notatation as fixed above, we have that the $W$-coset-wise $K$-affine function
\[
f_{\vec{\gamma},\vec{v}}:V\rightarrow V, v=w+u\mapsto w^{\alpha_u}+u+\omega_u+\nu_u,
\]
is a complete mapping of $V$ of cycle type
\[
\prod_{\ell=1}^{p^t}\prod_{i=1}^{k_{\ell}}{\BU_{\ell}(\gamma_{\ell,i})}.
\]
\end{theorem}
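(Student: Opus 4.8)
The plan is to verify that $f_{\vec{\gamma},\vec{v}}$ meets the conditions of Proposition~\ref{basicProp}(2) by translating everything through the wreath-product isomorphism of Theorem~\ref{wreathProdTheo} and then reading off the cycle type via Lemma~\ref{polyaLem}. First I would observe that the construction has been set up precisely so that $g_{\vec{\gamma},\vec{v}}$ coincides with the given complete mapping $g$ of $U$: indeed, $\nu_{u_{\ell,i,j}}=u_{\ell,i,(j+1)\bmod\ell}-u_{\ell,i,j}$ forces the coset $W_{u_{\ell,i,j}}$ to be sent to $W_{u_{\ell,i,(j+1)\bmod\ell}}$, so the induced permutation of cosets is exactly $g$. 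Since each $\alpha_{u}$ is (the right-multiplication action of) a matrix in $\CGL_d(p)$, hence represents a complete automorphism of $W$, Proposition~\ref{basicProp}(2) immediately yields that $f_{\vec{\gamma},\vec{v}}$ is a complete mapping of $V$.

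Next I would compute the cycle type. By Theorem~\ref{wreathProdTheo}, under $\iota$ the element $f_{\vec{\gamma},\vec{v}}$ corresponds to the wreath-product element $(g,(\lambda(\alpha_u,\omega_u))_{u\in U})\in\Aff(W)\wr_{\imp}\Sym(U)$, and isomorphic permutation groups preserve cycle types, so it suffices to compute $\CT$ of this wreath element. I would then apply Lemma~\ref{polyaLem} with $\psi=g$, letting $\zeta$ range over the cycles $\zeta_{\ell,i}$ of $g$. The key point is to identify the forward cycle product attached to each $\zeta_{\ell,i}$: reading the base-group entries in cyclic order gives
\[
\lambda(\alpha_{u_{\ell,i,0}},\omega_{u_{\ell,i,0}})\cdots\lambda(\alpha_{u_{\ell,i,\ell-1}},\omega_{u_{\ell,i,\ell-1}}).
\]
Because $\omega_{u_{\ell,i,j}}=\vec 0$ for $j<\ell-1$ and $\omega_{u_{\ell,i,\ell-1}}=w_{\ell,i}$, the affine maps compose so that the matrix parts multiply to $M_{\ell,i,0}\cdots M_{\ell,i,\ell-1}$ while the translation collapses to $w_{\ell,i}$; that is, the forward cycle product equals $\lambda(M_{\ell,i,0}\cdots M_{\ell,i,\ell-1},w_{\ell,i})$, whose cycle type is exactly $\gamma_{\ell,i}$ by the choice made via Corollary~\ref{cglCor}. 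Lemma~\ref{polyaLem} then gives $\CT(f_{\vec{\gamma},\vec{v}})=\prod_{\ell,i}\BU_{\ell}(\gamma_{\ell,i})$, as claimed.

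The one step that genuinely requires care — and which I expect to be the main obstacle — is the composition computation for the forward cycle product, because one must track both the order of multiplication and the interaction between the matrix and translation parts. I would write $\lambda(\alpha,\omega)$ as $x\mapsto x\alpha+\omega$ and verify by induction on the partial products that after composing the first $j$ factors the result is $\lambda(M_{\ell,i,0}\cdots M_{\ell,i,j-1},\,\vec 0)$ for $j<\ell$, with the single nonzero translation $w_{\ell,i}$ entering only at the last factor; a subtlety here is matching the left-to-right versus right-to-left conventions, which must be reconciled against the multiplication rule in Definition~\ref{impWreathDef}(1) and the action convention stated in Subsection~\ref{subsec1P2}. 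The remaining ingredients are routine: the equivalence with $g$ being a complete mapping and the appeal to Proposition~\ref{basicProp}(2) are immediate from the defining data, and the uniqueness of the forward-cycle-product cycle type up to conjugacy is already guaranteed by Lemma~\ref{polyaLem}, so no independent verification is needed there.
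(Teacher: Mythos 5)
Your proposal is correct and follows essentially the same route as the paper's own proof: transport $f_{\vec{\gamma},\vec{v}}$ through the isomorphism of Theorem~\ref{wreathProdTheo} to the wreath product element $(g,(\lambda(\alpha_u,\omega_u))_{u\in U})$, apply Lemma~\ref{polyaLem}, and compute each forward cycle product as $\lambda(M_{\ell,i,0}\cdots M_{\ell,i,\ell-1},w_{\ell,i})$, whose cycle type is $\gamma_{\ell,i}$ by the choice made via Corollary~\ref{cglCor}. Your explicit invocation of Proposition~\ref{basicProp}(2) for the complete-mapping property is a small, welcome addition that the paper leaves implicit, but it does not change the substance of the argument.
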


\begin{proof}
By Theorem \ref{wreathProdTheo}, $f_{\vec{\gamma},\vec{v}}$ corresponds under a suitable isomorphism of permutation groups $\CWAff_W(V)\rightarrow\Aff(W)\wr_{\imp}\Sym(U)$ to the wreath product element $(\psi,(\lambda(\alpha_u,\omega_u))_{u\in U})$ where $u^{\psi}=u+\nu_u$ for all $u\in U$. By definition of $\nu_u$, we have $u+\nu_u=u^g$, and so $f_{\vec{\gamma},\vec{v}}$ corresponds to $\sigma:=(g,(\lambda(\alpha_u,\omega_u))_{u\in U})$. By Lemma \ref{polyaLem}, it suffices to show that for all $\ell=1,2,\ldots,p^t$ and $i=1,2,\ldots,k_{\ell}$, the cycle type of any forward cycle product of $\sigma$ with respect to $\zeta_{\ell,i}$ is equal to $\gamma_{\ell,i}$. But by definition, the following is such a forward cycle product:
\begin{align*}
&\lambda(\alpha_{u_{\ell,i,0}},\omega_{u_{\ell,i,0}})\cdot\lambda(\alpha_{u_{\ell,i,1}},\omega_{u_{\ell,i,1}})\cdots\lambda(\alpha_{u_{\ell,i,\ell-1}},\omega_{u_{\ell,i,\ell-1}})= \\
&\lambda(\alpha_{u_{\ell,i,0}}\alpha_{u_{\ell,i,1}}\cdots\alpha_{u_{\ell,i,\ell-1}},\sum_{k=0}^{\ell-1}{\left(\prod_{j=k+1}^{\ell-1}{\alpha_{u_{\ell,i,j}}}\right)\omega_{u_{\ell,i,k}}})= \\
&\lambda(M_{\ell,i,0}M_{\ell,i,1}\cdots M_{\ell,i,\ell-1},w_{\ell,i}),
\end{align*}
and this has the cycle type $\gamma_{\ell,i}$ by construction.
\end{proof}

We remark that an analogous proof shows that if each $\gamma_{\ell,i}$ lies in $\CT(\AGL_d(p))$, but not necessarily in $\Gamma(d,p,\ell)$, then one obtains a permutation (but not necessarily a complete mapping) $f_{\vec{\gamma},\vec{v}}$ of $V$ of cycle type
\[
\prod_{\ell=1}^{p^t}\prod_{i=1}^{k_{\ell}}{\BU_{\ell}(\gamma_{\ell,i})}.
\]
by choosing matrices $M_{\ell,i,j}\in\GL_d(p)$ and vectors $w_{\ell,i}\in\IF_p^d$ such that
\[
\gamma_{\ell,i}=\lambda(M_{\ell,i,0}M_{\ell,i,1}\cdots M_{\ell,i,\ell-1},w_{\ell,i}).
\]

\section{Construction of one-cycle complete mappings}\label{sec5}

In this section, we discuss an exemplary application of Theorem \ref{mainTheoExplicit} -- constructing, for each odd prime power $q$, a permutation $f_q$ of $\IF_q$ of cycle type $x_q$ such that $f_q$ is a complete mapping of $\IF_q$. We remark that the construction also makes sense if $q$ is even and yields a permutation of $\IF_q$ of cycle type $x_q$ which is not a complete mapping of $\IF_q$.

Write $q=p^k$ with $k\geq1$. For fixed $p$, we recursively construct a complete mapping $h_k$ of the $\IF_p$-vector space $\IF_p^k$ of cycle type $x_{p^k}$ as follows:
\begin{itemize}
\item For $k=1$, let $h_1:\IF_p\rightarrow\IF_p$, $x\mapsto x+1$. Observe that $h_1$ is of cycle type $x_p$ and is a complete mapping of $\IF_p$ if $p>2$.
\item Now assume that $k>1$ and that we already defined a permutation $h_{k-1}$ of $\IF_p^{k-1}=:U$ such that $h_{k-1}$ is of cycle type $x_{p^{k-1}}$ and is a complete mapping of $U$ if $p>2$. We can write $h_{k-1}$ in cycle notation as
\[
(v_0,v_1,\ldots,v_{p^{k-1}-2},v_{p^{k-1}-1}=\vec{0}).
\]
Moreover, we write
\[
h_1=\lambda(1,1)=\lambda(1,0)\lambda(1,0)\cdots\lambda(1,0)\lambda(1,1)
\]
as a product with $\ell$ factors in $\AGL_1(p)$, each of which is a complete mapping of $\IF_p=:W$ if $p>2$. Following the proof of Theorem \ref{mainTheoExplicit}, if we set
\[
\vec{\gamma}:=(\id_W\oplus\id_U)_{u\in U}=(\id_{\IF_p^k})_{u\in U}
\]
and $\vec{v}:=(v_u)_{u\in U}$ with
\[
v_u:=
\begin{cases}
1_W+u^{h_{k-1}}-u, & \text{if }u=\vec{0}\in U, \\
u^{h_{k-1}}-u, & \text{if }u\not=\vec{0},
\end{cases}
\]
then the function $h_k:\IF_p^k=W\oplus U\rightarrow\IF_p^k$,
\begin{equation}\label{hRecEq}
w+u\mapsto w+u+v_u=\begin{cases}(w+1_W)+u^{h_{k-1}}, & \text{if }u=\vec{0}, \\ w+u^{h_{k-1}}, & \text{if }u\not=\vec{0},\end{cases}
\end{equation}
has cycle type $x_{p^k}$ and is a complete mapping of $\IF_p^k$ if $p>2$.
\end{itemize}

Using the recursive formula (\ref{hRecEq}), it is not hard to show by induction on $k$ that $h_k$ has the explicit form
\begin{equation}\label{hExplicitEq}
(x_1,\ldots,x_k)^{h_k}=(x_1,\ldots,x_{\ell-1},x_{\ell}+1,\ldots,x_k+1)\text{ if }x_k=x_{k-1}=\cdots=x_{\ell+1}=0.
\end{equation}
For example,
\[
(x,y)^{h_2}=\begin{cases}(x,y+1), & \text{if }y\not=0, \\ (x+1,y+1), & \text{if }y=0,\end{cases}
\]
and
\[
(x,y,z)^{h_3}=\begin{cases}(x,y,z+1), & \text{if }z\not=0, \\ (x,y+1,z+1), & \text{if }z=0\text{ and }y\not=0, \\ (x+1,y+1,z+1), & \text{if }y=z=0.\end{cases}
\]

We can also give a polynomial formula for a function $\IF_q\rightarrow\IF_q$ which, with regard to a suitable $\IF_p$-basis of $\IF_q$, has the form (\ref{hExplicitEq}) and thus is a permutation of $\IF_q$ of cycle type $x_q$ and a complete mapping of $\IF_q$ if $q$ is odd. This uses the following well-known elementary lemma:

\begin{lemma}\label{coordinateFunctionLem}
Let $q=p^k$ be a prime power, and let $\omega\in\IF_q$ be of algebraic degree $k$ over $\IF_p$, so that $\Bcal:=(\omega^i)_{i=0,1,\ldots,k-1}$ is an $\IF_p$-basis of $\IF_q$. For $i=0,1,\ldots,k-1$, denote by $\pi_i$ the function $\IF_q\rightarrow\IF_p\subseteq\IF_q$ which maps $x=\sum_{j=0}^{k-1}{x_j\omega^j}\in\IF_q$ to its $i$-th $\Bcal$-coordinate $x_i$. Then for all $x\in\IF_q$, we have
\begin{align}\label{coordinatesEq}
&\begin{pmatrix}
\notag \pi_0(x) & \pi_1(x) & \pi_2(x) & \cdots & \pi_{k-1}(x)
\end{pmatrix}
= \\
&\begin{pmatrix}
x & x^p & x^{p^2} & \cdots & x^{p^{k-1}}
\end{pmatrix}
\cdot
\begin{pmatrix}
1 & 1 & 1 & \cdots & 1 \\
\omega & \omega^p & \omega^{p^2} & \cdots & \omega^{p^{k-1}} \\
\omega^2 & \omega^{2p} & \omega^{2p^2} & \cdots & \omega^{2p^{k-1}} \\
\vdots & \vdots & \vdots  & \cdots & \vdots \\
\omega^{k-1} & \omega^{(k-1)p} & \omega^{(k-1)p^2} & \cdots & \omega^{(k-1)p^{k-1}}
\end{pmatrix}^{-1}.
\end{align}
\end{lemma}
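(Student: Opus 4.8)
The plan is to extract the claimed identity directly from the action of the Frobenius automorphism on the basis $\Bcal$, and then to invert the resulting matrix by recognizing it as a Vandermonde matrix in the Galois conjugates of $\omega$.

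First I would fix $x=\sum_{j=0}^{k-1}x_j\omega^j$ with $x_j=\pi_j(x)\in\IF_p$, and apply the $m$-th power $\phi^m:y\mapsto y^{p^m}$ of the Frobenius automorphism $\phi:y\mapsto y^p$ of $\IF_q$, for $m=0,1,\ldots,k-1$. Since $\phi^m$ is an $\IF_p$-algebra automorphism of $\IF_q$ fixing each scalar $x_j\in\IF_p$, it sends $x$ to
\[
x^{p^m}=\sum_{j=0}^{k-1}x_j(\omega^{p^m})^j=\sum_{j=0}^{k-1}x_j\omega^{jp^m}.
\]
Reading these $k$ equations simultaneously, and letting $M$ denote the $(k\times k)$-matrix appearing on the right-hand side of formula (\ref{coordinatesEq}) (so that its $(j,m)$-entry is $\omega^{jp^m}$), this is precisely the row-vector identity
\[
\begin{pmatrix}\pi_0(x) & \pi_1(x) & \cdots & \pi_{k-1}(x)\end{pmatrix}\cdot M=\begin{pmatrix}x & x^p & \cdots & x^{p^{k-1}}\end{pmatrix},
\]
using the paper's convention that matrices act on row vectors from the right. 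Multiplying on the right by $M^{-1}$ then yields formula (\ref{coordinatesEq}), provided $M$ is invertible.

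The only point genuinely requiring an argument is thus the invertibility of $M$. Here I would observe that the $m$-th column of $M$ is $\begin{pmatrix}1 & \omega^{p^m} & \omega^{2p^m} & \cdots & \omega^{(k-1)p^m}\end{pmatrix}^{\mathrm{T}}$, so $M$ is the Vandermonde matrix whose $k$ nodes are $\omega,\omega^p,\ldots,\omega^{p^{k-1}}$, and therefore $\det M=\pm\prod_{0\leq m<m'\leq k-1}(\omega^{p^{m'}}-\omega^{p^m})$. These nodes are exactly the images of $\omega$ under the cyclic group $\Gal(\IF_q/\IF_p)=\langle\phi\rangle$ of order $k$; since $\omega$ has algebraic degree $k$ over $\IF_p$, the orbit of $\omega$ under this group has full size $k$, whence the nodes are pairwise distinct and the Vandermonde determinant is nonzero. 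Hence $M\in\GL_k(q)$, which completes the proof. None of these steps poses a real obstacle, as the lemma is elementary; the only thing to be careful about is matching the index and transposition conventions, so that the matrix one inverts really is the matrix $M$ displayed in formula (\ref{coordinatesEq}) and not its transpose.
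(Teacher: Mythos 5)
Your proof is correct and complete. Note that the paper itself gives no proof of this lemma at all --- it is introduced as a ``well-known elementary lemma'' and used without justification --- so there is no argument to compare against; your route (apply the Frobenius powers $\phi^m$ to the coordinate expansion of $x$ to obtain the row-vector identity $\begin{pmatrix}\pi_0(x) & \cdots & \pi_{k-1}(x)\end{pmatrix}\cdot M=\begin{pmatrix}x & x^p & \cdots & x^{p^{k-1}}\end{pmatrix}$, then invert $M$ by reading its columns as a Vandermonde matrix in the pairwise distinct Galois conjugates $\omega^{p^m}$) is the standard argument, with both the coordinate identity and the distinctness of the conjugates justified correctly. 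The only remark worth adding is that $M$ is also the (transposed) Moore matrix of $1,\omega,\ldots,\omega^{k-1}$, whose determinant is nonzero precisely because these elements are $\IF_p$-linearly independent; your column-wise Vandermonde reading is equally elementary and needs only the full-size Galois orbit of $\omega$, so nothing is lost by taking that path.
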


Note that formula (\ref{coordinatesEq}) expresses each coordinate function $\pi_i$ as an $\IF_p$-linearized polynomial function. Now, the function $f_{p^k}:\IF_{p^k}\rightarrow\IF_{p^k}$ which with respect to the $\IF_p$-basis $\Bcal$ of $\IF_q$ has the form (\ref{hExplicitEq}) can be written as
\[
f_{p^k}(x)=
\begin{cases}
x+\omega^{k-1}, & \text{if }\pi_{k-1}(x)\not=0, \\
x+\omega^{k-1}+\omega^{k-2}, & \text{if }\pi_{k-1}(x)=0\text{ and }\pi_{k-2}(x)\not=0, \\
x+\omega^{k-1}+\omega^{k-2}+\omega^{k-3}, & \text{if }\pi_{k-1}(x)=\pi_{k-2}(x)=0\text{ and }\pi_{k-3}(x)\not=0, \\
\vdots & \vdots \\
x+\omega^{k-1}+\omega^{k-2}+\cdots+\omega+1, & \text{if }\pi_{k-1}(x)=\pi_{k-2}(x)=\cdots=\pi_1(x)=0.
\end{cases}
\]
We recursively define functions $g_j:\IF_{p^k}\rightarrow\IF_{p^k}$ for $j=1,2,\ldots,k-1$ as follows:
\begin{itemize}
\item $g_1(x):=1-\pi_1(x)^{p-1}$.
\item For $j=2,3,\ldots,k$: $g_j(x):=(1-\pi_j(x)^{p-1})(\omega^{j-1}+g_{j-1}(x))$.
\end{itemize}
It is not hard to show by induction on $j$ that
\[
g_j(x)=
\begin{cases}
0, & \text{if }\pi_j(x)\not=0, \\
\omega^{j-1}, & \text{if }\pi_j(x)=0\text{ and }\pi_{j-1}(x)\not=0, \\
\omega^{j-1}+\omega^{j-2}, & \text{if }\pi_j(x)=\pi_{j-1}(x)=0\text{ and }\pi_{j-2}(x)\not=0, \\
\vdots & \vdots \\
\omega^{j-1}+\omega^{j-2}+\cdots+\omega+1, & \text{if }\pi_j(x)=\pi_{j-1}(x)=\cdots=\pi_1(x)=0.
\end{cases}
\]
Hence
\[
f_{p^k}(x)=x+\omega^{k-1}+g_{k-1}(x)\text{ for all }x\in\IF_{p^k}.
\]
Since we know the reduced polynomial forms of the coordinate functions $\pi_i$ by formula (\ref{coordinatesEq}), we can recursively work out the reduced polynomial forms of the functions $g_j$, allowing us to compute the reduced polynomial form of $f_{p^k}$.

\begin{example}\label{f27Ex}
We compute a reduced polynomial over $\IF_{27}$ that represents a complete mapping of $\IF_{27}$ of cycle type $x_{27}$. The computations in this example were carried out using GAP \cite{GAP4}. Let $\omega\in\IF_{27}$ be a root of the Conway polynomial $X^3-X+1\in\IF_3[X]$. In particular, $\omega$ is a primitive root of $\IF_{27}$. By Lemma \ref{coordinateFunctionLem}, we have for all $x\in\IF_{27}$ that
\begin{align*}
&\begin{pmatrix}
\pi_0(x) & \pi_1(x) & \pi_2(x)
\end{pmatrix}= \\
&\begin{pmatrix}
x & x^3 & x^9
\end{pmatrix}
\cdot
\begin{pmatrix}
1 & 1 & 1 \\
\omega & \omega^3 & \omega^9 \\
\omega^2 & \omega^6 & \omega^{18}
\end{pmatrix}^{-1}= \\
&\begin{pmatrix}
x & x^3 & x^9
\end{pmatrix}
\cdot
\begin{pmatrix}
\omega^{25} & \omega^{14} & -1 \\
\omega^{23} & \omega^{16} & -1 \\
\omega^{17} & \omega^{22} & -1
\end{pmatrix}.
\end{align*}
Hence
\begin{align*}
&\pi_0(x)=\omega^{25}x+\omega^{23}x^3+\omega^{17}x^9, \\
&\pi_1(x)=\omega^{14}x+\omega^{16}x^3+\omega^{22}x^9, \\
&\pi_2(x)=-x-x^3-x^9.
\end{align*}
Observe that
\begin{align*}
&g_1(x)=1-\pi_1(x)^2= \\
&\omega^5x^{18}+\omega^{12}x^{12}+\omega^{10}x^{10}+\omega^{19}x^6+\omega^4x^4+\omega^{15}x^2+1
\end{align*}
and
\begin{align*}
&g_2(x)=(1-\pi_2(x)^2)\cdot(\omega+g_1(x))= \\
&\omega^{18}x^{36}+\omega^{10}x^{30}+\omega^{12}x^{28}+x^{24}+x^{22}+x^{20}+\omega^{16}x^{18}+x^{16}+x^{14}+\omega^9x^{12}+\omega^{23}x^{10}+ \\
&x^8+\omega^{16}x^6+\omega^{25}x^4+\omega^{19}x^2+\omega^9= \\
&x^{24}+x^{22}+x^{20}+\omega^{16}x^{18}+x^{16}+x^{14}+\omega^9x^{12}+\omega^9x^{10}+x^8+\omega^{16}x^6+\omega^9x^4+ \\
&\omega^{16}x^2+\omega^9.
\end{align*}
Therefore, we have
\begin{align*}
&f_{27}(x)=x+\omega^2+g_2(x)= \\
&x^{24}+x^{22}+x^{20}+\omega^{16}x^{18}+x^{16}+x^{14}+\omega^9x^{12}+\omega^9x^{10}+x^8+\omega^{16}x^6+\omega^9x^4+ \\
&\omega^{16}x^2+x+\omega^6.
\end{align*}
\end{example}

\section{Concluding remarks}\label{sec6}

In this paper, we were concerned with producing examples of cycle types of complete mappings of finite fields. That is, we exhibited elements of the set
\[
\{\CT(f): f\text{ is a complete mapping of }\IF_q\}.
\]
There is also a related, harder problem, which asks for simultaneous control over the cycle types of a complete mapping $f$ and its associated orthomorphism $f+\id$. In other words, this problem is concerned with the set
\[
\{(\CT(f),\CT(f+\id)): f\text{ is a complete mapping of }\IF_q\}
\]
of cycle type pairs. For the most basic examples of complete mappings, scalar multiplications $f_a:x\mapsto ax$ for a fixed $a\in\IF_q^{\ast}\setminus\{-1\}$, controlling the cycle types of $f_a$ and $f_a+\id=f_{a+1}$ simultaneously is tantamount to controlling the multiplicative orders of $a$ and $a+1$ simultaneously, for which a theorem of Carlitz, \cite[Theorem 1]{Car56a}, is an important tool.

However, with $f_a$ and $f_a+\id$ both being scalar multiplications, the range of possibilities for $(\CT(f_a),\CT(f_a+\id))$ is rather limited, and in order to obtain more interesting examples of such cycle type pairs, one needs to study a larger class of complete mappings $f$ such that simultaneous control over $\CT(f)$ and $\CT(f+\id)$ can be gained. In a follow-up paper, the authors intend to do so for a certain subclass of the class of complete, coset-wise $\IF_p$-affine mappings of $\IF_{p^k}$.

\end{document}